\let\orgdescriptionlabel\descriptionlabel
\renewcommand*{\descriptionlabel}[1]{%
  \let\orglabel\label
  \let\label\@gobble
  \phantomsection
  \protected@edef\@currentlabel{#1\unskip}%
  \let\label\orglabel
  \orgdescriptionlabel{#1}%
}
\numberwithin{equation}{section}
\newtheorem{corollary}{Corollary}[section]
\newtheorem{proposition}[corollary]{Proposition}
\newtheorem{lemma}[corollary]{Lemma}
\newtheorem{theorem}[corollary]{Theorem}
\theoremstyle{definition}
\newtheorem{remark}[corollary]{Remark}
\newtheorem{example}{Example}
\newcommand{\numset}[1]{\mathbb{#1}}
	\newcommand{\rr}{\numset{R}}
	\newcommand{\zz}{\numset{Z}}
	\newcommand{\nn}{\numset{N}}
	\newcommand{\one}{\boldsymbol{1}}
	\newcommand{\e}{\mathrm{e}}
		\newcommand{\Exp}[1]{\mathrm{e}^{#1}}
	\providecommand*{\diff}%
	{\@ifnextchar^{\DIfF}{\DIfF^{}}}
	\def\DIfF^#1{%
	\mathop{\mathrm{\mathstrut d}}%
	\nolimits^{#1}\gobblespace}
	\def\gobblespace{%
	\futurelet\diffarg\opspace}
	\def\opspace{%
	\let\DiffSpace\!%
	\ifx\diffarg(%
	\let\DiffSpace\relax
	\else
	\ifx\diffarg%
	\let\DiffSpace\relax
	\else
	\ifx\diffarg\{%
	\let\DiffSpace\relax
	\fi\fi\fi\DiffSpace}
	\renewcommand{\d}{\diff}
	\renewcommand{\P}{\mathbb{P}}
	\newcommand{\E}{\mathbb{E}}
	\newcommand{\Va}{\mathbb{V}}
	\newcommand{\Co}{\mathbb{V}}
\newcommand{\cA}{\mathcal{A}}
\title{On a waiting-time result of Kontoyiannis: \\ mixing or decoupling?}
\author{Giampaolo Cristadoro\textsuperscript{1}, Mirko Degli Esposti\textsuperscript{2}, Vojkan Jak\v{s}i\'{c}\textsuperscript{3}, Renaud Raqu\'{e}pas\textsuperscript{4}}
\begin{document}

\maketitle

\begin{center}
\small
\begin{tabular}{c c c}
   1. Universit\`a degli Studi di Milano-Bicocca & & 2. Universit\`a di Bologna\\
   Dipartimento di Matematica e Applicazioni & & Dipartimento di Fisica e Astronomia ``Augusto Righi'' \\
	 via R.\ Cozzi 55 && via Irnerio 46 \\
   20125 Milano, Italy && 40126 Bologna, Italy \\
	 &&\\
   3. McGill University & & 4. New York University \\
   Department of Mathematics and Statistics & & Courant Institute of Mathematical Sciences \\
   1005--805 rue Sherbrooke~Ouest & & 251 Mercer Street\\
  Montr{\'e}al (Qu{\'e}bec) ~H3A\,0B9, Canada & & New York, NY 10012, United States \\
\end{tabular}
\end{center}

\begin{abstract}
  We introduce conditions of lower decoupling to the study of waiting-time estimations of the cross entropy between  two mutually independent stationary stochastic processes. Although similar decoupling conditions have been  used  in the literature on large deviations and statistical mechanics, they appear largely unexplored in information theory. Building on a result of Kontoyiannis, namely Theorem~4 in~\cite{Ko1}, and replacing the $\psi$-mixing condition in this result with a lower decoupling  condition, we considerably extend the validity of waiting-time estimation of cross entropy.
  \\

  \noindent\textbf{MSC2020:} \emph{Primary} 37B20, 37B10; \emph{Secondary} 37M25, 94A15.
\end{abstract}


\newcommand{\bX}{\mathbf{X}}
\newcommand{\bY}{\mathbf{Y}}

\section{Introduction}

Throughout this work, $\bX = (X_n)_{n\in\nn}$ and $\bY = (Y_n)_{n\in\nn}$ are  two stationary random processes taking values in the same countable alphabet~$\cA$. The processes are assumed to be independent from one another. The realizations of~$\bX$ and~$\bY$, denoted by $x=(x_n)_{n\in \nn}$ and $y=(y_n)_{n\in \nn}$  respectively, are elements of $\Omega = {\cal A}^\nn$.
If $z \in \Omega$ and  $k,n \in \nn$ with $k\leq n$, we write $z_k^n$ for $(z_k, \dotsc, z_n)$.
Given an  $a \in \cA^n$,
\[
	[a] := \left\{z \in \Omega : z_1^n = a\right\}
\]
is the \emph{basic cylinder} prescribed by~$a$.
The Borel $\sigma$-field~${\cal F}$ in~$\Omega$ (with the product topology obtained from the discrete topology on~$\mathcal{A}$) is generated by such basic cylinders.
The probability distributions of~$\bX$ and~$\bY$ are probability measures on~$(\Omega, {\cal F})$ denoted by~$\P_\bX$ and~$\P_\bY$ respectively. By the stationarity assumption, these measures are invariant under the shift~$\varphi$ on~$\Omega$ that maps $(z_n)_{n\in\nn}$ to $(z_{n+1})_{n\in\nn}$.

For a given~$n\in\nn$, we use $\P_\bX^{(n)}$ and $\P_\bY^{(n)}$ for the probability distributions of $x_1^n$ and $y_1^n$ respectively.
In other words, they are the unique  probability measures on ${\cal A}^n$ satisfying  $\P_\bX^{(n)}(a)=\P_\bX([a])$ and  $\P_{\bY}^{(n)}(a)=\P_\bY([a])$ for all $a \in \cA^n$.

Given two realizations~$x$ and~$y$, one from each process, the waiting time $W_n$ is defined by
\[
	W_n(x, y) := \inf\left\{k \in \nn : y_{k}^{k+n-1} = x_1^n \right\}.
\]
The large-$n$ asymptotics of $W_n$ is linked to the notion of (specific) cross entropy of the pair~$(\bX, \bY)$ defined by the limit
\begin{equation}
	H^{\textnormal{cross}}[\bX,\bY] := \lim_{n\to\infty} \frac {1}{n} \left[-\sum_{a \in \cA^n} \P_\bX^{(n)}(a) \log \P_\bY^{(n)}(a)\right],
\label{crent-1}
\end{equation}
that is assumed to exist in~$[0,\infty]$; we allow for properly divergent cases with infinite cross entropy. The sequence in~\eqref{crent-1} may fail to converge or properly diverge; see \emph{e.g.}\ Exercise 1.c in \cite[\S{II.1.e}]{Sh2}.  Note that if~$\P_\bX = \P_\bY$, then
$H^{\textnormal{cross}}[\bX,\bY]$ exists and is equal to the (specific) entropy of the process~$\bX$, denoted~$H[\bX]$.

The pair~$(\bX, \bY)$ is called \emph{cross-entropy regular} if the limit
\[
 h_{[\bX, \bY]}(x):=\lim_{n\to\infty}  -\frac{\log \P_\bY^{(n)} (x_1^n)}{n}
\]
exists in~$[0,\infty]$ for $\P_\bX$-almost all~$x \in \Omega$, is $\P_{\bX}$-almost surely shift invariant, and
\[
	\int_\Omega h_{[\bX, \bY]}\d \P_\bX = H^{\textnormal{cross}}[\bX,\bY].
\]
If $\bX$ is ergodic, then cross-entropy regularity implies that $h_{[\bX, \bY]}(x)=H^{\textnormal{cross}}[\bX,\bY]$ for $\P_\bX$-almost all~$x \in \Omega$.
A cross-entropy regular pair $(\bX, \bY)$ is called \emph{waiting-time regular} if
\begin{equation}
\label{wre-1}
 	\lim_{n\to\infty} \frac{\log W_n(x, y)}{n}= h_{[\bX, \bY]}(x)
\end{equation}
for $(\P_\bX\times\P_\bY)$-almost all pairs~$(x,y)$.
With the exception of the works \cite{Ko1, Ko2}, all classical works on waiting-time regularity considered the problem under the assumptions that~$\cA$ is finite and that $\P_\bX = \P_\bY$. In this setting, the cross-entropy regularity is guaranteed by the Shannon--McMillan--Breiman theorem, and the goal is to prove that
\begin{equation}
\label{wre-2}
 	\lim_{n\to\infty} \frac{\log W_n(x, y)}{n}= H[\bX]
\end{equation}
for $(\P_\bX\times\P_\bX)$-almost all pairs~$(x,y)$. The  first result in this direction goes back  to the seminal work of Wyner and Ziv~\cite{WZ}, where~$\bX$
was assumed to be an irreducible Markov chain and the convergence~\eqref{wre-2} was established in probability; see also~\cite{NW} for a follow-up work.
These results were considerably refined in~\cite{Sh1, Sh2}, where ($\P_\bX\times\P_\bX$)-almost sure convergence was established for irreducible Markov chains and functions of them (the so-called \emph{function-Markov} or \emph{hidden Markov} models), and for $\beta$-mixing processes.\footnote{The $\beta$-mixing condition is equivalent to the weak Bernoulli property in terms of which the results in \cite{Sh1, Sh2} were originally formulated.}
Moreover, Shields constructed an example of a very weak Bernoulli process\footnote{Such processes are mixing; see \cite[Ch.\,4]{Sh2} for detailed discussion.}~$\bX$ for which the convergence~\eqref{wre-2} fails to hold in probability. The deep and involved arguments of~\cite{Sh1, Sh2} appear unsuitable for generalizations to~$\P_\bX \neq \P_\bY$.

Kontoyiannis' works~\cite{Ko1, Ko2} provide an altogether different approach to the problem of waiting-time regularity that does allow for such generalizations. This approach is centered around the validity of the $(\P_\bX\times \P_\bY)$-almost sure convergence
\begin{equation}
  \lim_{n\rightarrow \infty} \frac{1}{n} \log \left[ W_n(x, y) \P_\bY^{(n)}(x_1^n)\right]=0.
\label{kont-1}
\end{equation}
Our focus here is on Theorem~4 in~\cite{Ko1}, where~\eqref{kont-1} is proven under the assumption that the alphabet ${\cal A}$ is finite and that the process~$\bY$ is $\psi$-mixing; see Theorems~\ref{thm-Ko-gen} and~\ref{thm-Ko-psi} below\footnote{In~\cite[\S{4.4}]{Ko2}, Kontoyiannis discusses the case where $\bY$ is a $\phi$-mixing processes with summable coefficients. This result is reviewed
in Remark~\ref{rem-ka} in Section~\ref{sec-remarks}.}.
Kontoyiannis' original proof of this result easily extends to countably infinite alphabets.
Our main result is that  the $\psi$-mixing assumption on~$\bY$ can be also relaxed and replaced by a general \emph{weak lower-decoupling condition} on the distribution~$\P_\bY$ which we formulate in Section~\ref{sec-mr}. Such conditions have a long history, but are\,---\,to our knowledge\,---\,rarely used in the information-theoretic setting.
The resulting generalization of Kontoyiannis' result, stated and proven in Section~\ref{sec-mr}, has  many applications. In particular, it provides a new proof of Shields' result, Theorem~1.1 in~\cite{Sh1}, where~$\bX$ is an irreducible, finite-state Markov chain or function of such a chain and $\P_\bX=\P_\bY$, and extends it to the case where~$\bX$ is an arbitrary ergodic process and~$\bY$ is an irreducible, finite-state Markov chain or a function of such a chain. Other applications are discussed in Section~\ref{sec-examples}.

This paper is organized as follows. In Section~\ref{sec-cont}, we review and reprove Theorem~4 in~\cite{Ko1}.
Our main result is stated and proven in Section~\ref{sec-mr}.
This result relies on the weak lower-decoupling condition~\ref{it:WLD}, which is further discussed in Section~\ref{sec-WLD}.
In Section~\ref{sec-cross-entropy}, we discuss upper-decoupling conditions on~$\P_\bY$ that, combined with suitable extensions of Kingman's subadditive ergodic theorem, ensure the cross-entropy regularity of a general pair~$(\bX, \bY)$.
Section~\ref{sec-examples} is devoted to the discussion of several specific examples to which our results apply.
Finally, in Section~\ref{sec-remarks}, we discuss some technical aspects of the proof, several further generalizations, and comment on the companion paper~\cite{CDEJR1}.

\paragraph*{Clarifications on terminology and notation.}
	All random processes in this paper are discrete in time and indexed by~$\nn = \left\{1,2,\dotsc\right\}$. We will freely use the standard strong mixing conditions  for such processes ($\alpha$-, $\beta$-, $\phi$- and $\psi$-mixing); see the  review \cite{Br1} for definitions and additional information.
  We use ``countable'' to refer to both finite and countably infinite sets, and the standard conventions  $\inf \emptyset = \infty$, $\sup \emptyset = -\infty$ and $0 \log 0 = 0$.  All measures considered are probability measures.

  \paragraph*{Acknowledgments} This work  was supported by the {\em Agence Nationale de la Recherche} through
  the grant NONSTOPS (ANR-17-CE40-0006-01, ANR-17-CE40-0006-02, ANR-17-CE40-0006-03), and was partly developed during VJ’s and MDE's stays at the CY Advanced Studies, whose support is gratefully acknowledged.
  Additional funding was provided by the CY Initiative of Excellence (\emph{Investissements d’Avenir}, grant ANR-16-IDEX-0008).
  GC acknowledges partial support by the PRIN Grant 2017S35EHN ``Regular and stochastic behaviour in dynamical systems'' of the Italian Ministry of University and Research (MUR),  and by the UMI Group ``DinAmicI''.
  VJ acknowledges the support of NSERC. Most of this work was done while RR was a post-doctoral researcher at CY Cergy Paris Universit\'e and supported by the LabEx MME-DII (\emph{Investissements d'Avenir}). Part of this work was also completed during RR's stay at the Centre de recherches math\'ematiques of Universit\'e de Montr\'eal, whose support is gratefully acknowledged.
  The authors wish to thank T.\,Benoist, N.\,Cuneo, A.C.D.\,van Enter and E.\,Verbitskiy for useful discussions.

\section{Results of Kontoyiannis}
\label{sec-cont}

Theorem 4 in \cite{Ko1} actually contains two results, stated as Theorems~\ref{thm-Ko-gen} and~\ref{thm-Ko-psi} below.
The  first one is completely general within the framework of mutually independent stationary processes with values in the same finite alphabet; our only remark is that Kontoyiannis' proof  accommodates countably infinite alphabets as well. The second one relies on the $\psi$-mixing condition, and this is where our change of the point of view will enter; again, the proof in \cite[\S{2}]{Ko1} accommodates countably infinite alphabets. We provide a slightly adapted version of  the statements and the proofs of these two results for the sake of later discussions.

\begin{theorem}
\label{thm-Ko-gen}
	Suppose that $\P_\bX^{(n)}\ll \P_{\bY}^{(n)}$ for all $n\in\nn$. Then, for all~$\epsilon > 0$ and $\beta > 0$,
	\[
		\log \left[W_n(x, y)\P_\bY^{(n)}(x_1^n) \right] \geq - \epsilon n^\beta
	\]
	eventually, $(\P_\bX\times \P_\bY)$-almost surely. In particular,
	\[
		\liminf_{n\rightarrow \infty}\frac{\log \left[W_n(x, y)\P_{\bY}^{(n)}(x_1^n) \right]}{n} \geq 0
	\]
	$(\P_\bX\times \P_\bY)$-almost surely.
\end{theorem}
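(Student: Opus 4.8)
The goal is a lower bound on $\log[W_n(x,y)\P_\bY^{(n)}(x_1^n)]$, so I would bound the probability that this quantity is very negative, i.e.\ that $W_n(x,y)$ is much smaller than $1/\P_\bY^{(n)}(x_1^n)$. The plan is to use a first-moment (union bound) argument over the integrability, conditioned on the realization $x$. Fix $\epsilon>0$ and $\beta>0$. For a fixed word $a=x_1^n\in\cA^n$, the event $\{W_n(x,y)\le k\}$ says that $a$ appears somewhere in $y$ starting at one of the positions $1,\dots,k$; by stationarity and a union bound, $\P_\bY(W_n(\cdot\,,\cdot)\le k\mid x_1^n=a)\le k\,\P_\bY^{(n)}(a)$. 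Taking $k=\lfloor\e^{-\epsilon n^\beta}/\P_\bY^{(n)}(a)\rfloor$ gives that, conditionally on $x_1^n=a$,
\[
	\P_\bY\!\left(\log\!\left[W_n(x,y)\P_\bY^{(n)}(a)\right] < -\epsilon n^\beta\right) \le \e^{-\epsilon n^\beta}.
\]
Crucially this bound is uniform in $a$, hence integrating against $\P_\bX^{(n)}$ yields
\[
	(\P_\bX\times\P_\bY)\!\left(\log\!\left[W_n(x,y)\P_\bY^{(n)}(x_1^n)\right] < -\epsilon n^\beta\right) \le \e^{-\epsilon n^\beta}.
\]
Here the hypothesis $\P_\bX^{(n)}\ll\P_\bY^{(n)}$ is what guarantees $\P_\bY^{(n)}(x_1^n)>0$ for $\P_\bX$-almost every $x$, so that the logarithm and the choice of $k$ make sense.

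The series $\sum_n \e^{-\epsilon n^\beta}$ converges for every $\epsilon>0$ and $\beta>0$, so by the Borel--Cantelli lemma the event $\{\log[W_n(x,y)\P_\bY^{(n)}(x_1^n)] < -\epsilon n^\beta\}$ occurs for only finitely many $n$, $(\P_\bX\times\P_\bY)$-almost surely. This is exactly the claimed ``eventually'' statement. The ``in particular'' clause follows immediately: dividing by $n$ and letting $n\to\infty$, the eventual bound $\log[W_n(x,y)\P_\bY^{(n)}(x_1^n)]\ge -\epsilon n^\beta$ with, say, $\beta<1$ gives $\liminf_n \frac1n\log[W_n(x,y)\P_\bY^{(n)}(x_1^n)]\ge -\epsilon$ almost surely; intersecting over a sequence $\epsilon\downarrow 0$ gives the $\liminf\ge 0$ on a set of full measure.

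The only delicate point is the conditioning step and making the union bound rigorous: one must be careful that $\P_\bY(W_n\le k\mid x_1^n=a)$ really is controlled by $k\,\P_\bY^{(n)}(a)$, which is where the \emph{independence} of $\bX$ and $\bY$ and the \emph{stationarity} of $\bY$ enter (the conditioning on $x$ does nothing to $\bY$, and each of the $k$ shifted occurrences has probability exactly $\P_\bY^{(n)}(a)$). Everything else is a routine Borel--Cantelli wrap-up; notably, no mixing or decoupling hypothesis on $\bY$ is needed for this direction, which is precisely why Theorem~\ref{thm-Ko-gen} is stated in full generality and the decoupling condition will only be required for the matching upper bound.
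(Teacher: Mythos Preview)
Your proposal is correct and follows essentially the same approach as the paper: a union bound plus stationarity gives $(\P_\bX\times\P_\bY)\{x_1^n=a,\ W_n<t\}\le t\,\P_\bX^{(n)}(a)\P_\bY^{(n)}(a)$, one then sets $t=\e^{-\epsilon n^\beta}/\P_\bY^{(n)}(a)$, sums over admissible~$a$, and applies Borel--Cantelli. The only cosmetic difference is that the paper works directly with the joint measure rather than phrasing the estimate as a conditional probability; also, for the ``in particular'' clause note that with $\beta<1$ you actually get $\liminf\ge 0$ immediately (since $\epsilon n^{\beta-1}\to 0$), so the intersection over $\epsilon\downarrow 0$ is unnecessary in that case.
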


\begin{theorem}
\label{thm-Ko-psi}
	Suppose that $\P_\bX^{(n)} \ll \P_\bY^{(n)}$ for all $n\in\nn$ and that $\bY$ is $\psi$-mixing. Then, for all~$\epsilon > 0$ and~$\beta > 0$,
	\[
		\log \left[W_n(x, y)\P_\bY^{(n)}(x_1^n)\right] \leq \epsilon n^\beta
	\]
	eventually, $(\P_\bX\times \P_\bY)$-almost surely. In particular, this estimate and Theorem~\ref{thm-Ko-gen} yield that
	\[
	\label{trip-milan}
		\lim_{n\rightarrow \infty}\frac{\log \left[W_n(x, y)\P_\bY^{(n)}(x_1^n) \right]}{n} = 0
	\]
	$(\P_\bX\times \P_\bY)$-almost surely.
\end{theorem}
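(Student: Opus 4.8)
The plan is to produce a good upper bound on $W_n(x,y)$ that holds with high $\P_\bY$-probability in $y$, for each fixed $x$ in a set of full $\P_\bX$-measure, and then to assemble these estimates via Borel--Cantelli. First I would fix a realization $x$ and a length $n$, and consider the target word $a := x_1^n \in \cA^n$. The heuristic is that, in a typical $y$-realization, the word $a$ should appear for the first time after roughly $1/\P_\bY^{(n)}(a)$ steps, which is exactly the content of $\log[W_n\,\P_\bY^{(n)}(x_1^n)]\approx 0$. To turn this heuristic into a rigorous upper bound, I would look at the disjoint blocks $y_{1}^{n}, y_{n+1}^{2n}, y_{2n+1}^{3n}, \dotsc$ of length $n$ (so that the block events are governed by the marginal $\P_\bY^{(n)}$) and estimate the probability that none of the first $m$ such blocks equals $a$. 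Here the $\psi$-mixing hypothesis enters: it allows one to compare the probability of the intersection of the ``block $\neq a$'' events over widely separated blocks to the product of their individual probabilities, up to a multiplicative factor controlled by the $\psi$-mixing coefficients, which tend to $1$. Concretely, using lower $\psi$-mixing one gets a bound of the form $\P_\bY(\text{first } m \text{ blocks all avoid } a) \leq \prod (1 - c\,\P_\bY^{(n)}(a))$ for a constant $c$ eventually close to $1$, hence at most $\exp(-c\, m\, \P_\bY^{(n)}(a))$.

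From this, choosing $m = m_n$ to be an integer comparable to $\P_\bY^{(n)}(x_1^n)^{-1}\,\e^{\epsilon n^\beta/2}$ makes the avoidance probability summable in $n$ (uniformly enough over the relevant $x$), while on the complementary event one has $W_n(x,y) \le n\, m_n$, which gives $\log[W_n(x,y)\,\P_\bY^{(n)}(x_1^n)] \le \log n + \epsilon n^\beta/2 + O(1) \le \epsilon n^\beta$ eventually. The remaining subtlety is that $m_n$ depends on $x$ through $\P_\bY^{(n)}(x_1^n)$, so the Borel--Cantelli argument must be run on the product space: one integrates the conditional $\P_\bY$-probability of the bad event over $\P_\bX(\d x)$ and checks that the resulting sequence is summable, using that $-\tfrac1n\log\P_\bY^{(n)}(x_1^n)$ is $\P_\bX$-a.s.\ bounded along a subsequence, or more simply that the contribution where $\P_\bY^{(n)}(x_1^n)$ is atypically small is itself $\P_\bX$-negligible. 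Combining the resulting eventual upper bound with the eventual lower bound of Theorem~\ref{thm-Ko-gen} yields both $\limsup \le 0$ and $\liminf \ge 0$ of $\tfrac1n\log[W_n\,\P_\bY^{(n)}(x_1^n)]$, hence the claimed limit.

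I expect the main obstacle to be the passage from ``intersection of block-avoidance events'' to ``product of marginals'' with a multiplicative error that is harmless after taking $\sim m_n$ factors. A naive application of $\psi$-mixing to the full $m_n$-fold intersection accumulates an error factor like $\psi^{*}(\text{gap})^{m_n}$, and since $m_n$ grows (super)exponentially in $n^\beta$ while $\psi$-coefficients for a fixed gap need not be exactly $1$, one must be careful: the clean way is to peel off the blocks one at a time, at each step conditioning on the past and using the \emph{lower} $\psi$-mixing bound $\P_\bY(B_{k+1}\neq a \mid B_1\neq a,\dotsc,B_k\neq a) \le 1 - (1-\psi^{*}_{\text{gap}})\P_\bY^{(n)}(a)$ with a gap that can be taken to grow slowly with $k$, so that the prefactors telescope rather than multiply destructively. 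Making this telescoping rigorous\,---\,and verifying that the slowly growing gaps still leave enough blocks in a window of length $\sim n\,m_n$\,---\,is the technical heart; once it is in place, the Borel--Cantelli bookkeeping over the product space is routine.
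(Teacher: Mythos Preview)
Your plan is correct and is essentially the paper's own argument: peel off block-avoidance events one at a time via the lower $\psi$-mixing bound, obtain a factor of the form $(1-(1-\delta)\P_\bY^{(n)}(a))^{J}$, choose $t=\Exp{\epsilon n^\beta}\P_\bY^{(n)}(a)^{-1}$, and finish with Borel--Cantelli after summing over~$a$. Two of your anticipated obstacles are not actually present. First, there is no need for gaps that grow with~$k$: a single fixed gap~$\ell$ with $\psi_\bY(\ell)\le\delta<1$ suffices, because the telescoped bound uses only the \emph{one-step} conditional estimate $\P_\bY(A_{j+1})/\P_\bY(A_j)\le 1-(1-\delta)\P_\bY^{(n)}(a)$ and no multiplicative errors accumulate. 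Second, there is no need to isolate ``atypically small'' $\P_\bY^{(n)}(x_1^n)$ or to control $-\tfrac1n\log\P_\bY^{(n)}(x_1^n)$: for each~$a$ with $\P_\bX^{(n)}(a)>0$ the final estimate is $\P_\bX^{(n)}(a)\cdot\delta^{-1}(1-\delta)^{-1}(n+\ell)\Exp{-\epsilon n^\beta}$, uniformly in the size of~$\P_\bY^{(n)}(a)$, so summing over~$a$ gives a summable sequence outright.
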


As discussed in Section~\ref{sec-cross-entropy} below, the assumption that the process~$\bY$ is $\psi$-mixing ensures that the pair~$(\bX, \bY)$ is cross-entropy regular,  and so the following two corollaries are immediate consequences of Theorems~\ref{thm-Ko-gen} and~\ref{thm-Ko-psi}.

\begin{corollary}
\label{cor-main-1}
	Suppose that $\P_\bX^{(n)} \ll \P_\bY^{(n)}$ for all $n\in\nn$ and that $\bY$ is $\psi$-mixing.
	Then,
	\[
		\lim_{n\to\infty} \frac{\log W_n(x, y)}{n} = h_{[\bX, \bY]}(x)
	\]
	$(\P_\bX\times \P_\bY)$-almost surely. If, in addition, the process $\bX$ is ergodic, then
	\[
		\lim_{n\to\infty} \frac{\log W_n(x, y)}{n} = H^{\textnormal{cross}}[\bX,\bY]
	\]
	$(\P_\bX\times \P_\bY)$-almost surely.
\end{corollary}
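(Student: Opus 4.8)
The plan is to obtain the corollary as a direct combination of Theorem~\ref{thm-Ko-psi} with the cross-entropy regularity of $(\bX,\bY)$ that the $\psi$-mixing assumption on $\bY$ provides (this implication being established in Section~\ref{sec-cross-entropy}). First I would observe that the hypothesis $\P_\bX^{(n)}\ll\P_\bY^{(n)}$ guarantees that $\P_\bY^{(n)}(x_1^n)>0$ for every $n$, for $\P_\bX$-almost every $x$; moreover, Theorem~\ref{thm-Ko-psi} ensures that, for $(\P_\bX\times\P_\bY)$-almost every pair $(x,y)$, one has $W_n(x,y)<\infty$ for all large $n$. Hence, $(\P_\bX\times\P_\bY)$-almost surely and for all large $n$, the elementary identity
\[
  \frac{\log W_n(x,y)}{n}
  = \frac{1}{n}\log\!\left[W_n(x,y)\,\P_\bY^{(n)}(x_1^n)\right] - \frac{\log\P_\bY^{(n)}(x_1^n)}{n}
\]
is meaningful.

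Next I would pass to the limit $n\to\infty$ in this identity. By Theorem~\ref{thm-Ko-psi}, the first term on the right-hand side converges to $0$, $(\P_\bX\times\P_\bY)$-almost surely. Since $\bY$ is $\psi$-mixing, the pair $(\bX,\bY)$ is cross-entropy regular, so the second term converges to $h_{[\bX,\bY]}(x)$ in $[0,\infty]$, for $\P_\bX$-almost every $x$ and hence for $(\P_\bX\times\P_\bY)$-almost every $(x,y)$. As the first of these two limits is the finite value $0$, the limit of the left-hand side exists in $[0,\infty]$ and equals $h_{[\bX,\bY]}(x)$; this is the first assertion. For the second assertion, I would invoke the definition of cross-entropy regularity: the function $h_{[\bX,\bY]}$ is $\P_\bX$-almost surely shift invariant, so if $\bX$ is ergodic it is $\P_\bX$-almost surely equal to a constant, which must be its $\P_\bX$-integral, namely $H^{\textnormal{cross}}[\bX,\bY]$. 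Combining this with the first assertion finishes the proof.

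There is essentially no obstacle internal to this argument — the content is entirely carried by Theorem~\ref{thm-Ko-psi} and by the implication ``$\psi$-mixing of $\bY$ $\Rightarrow$ $(\bX,\bY)$ cross-entropy regular'' from Section~\ref{sec-cross-entropy}. The only point deserving a word of care is that $h_{[\bX,\bY]}(x)$ may be $+\infty$; this is harmless here precisely because it is the other summand, the one involving $W_n$, that is forced to vanish, so the addition of limits remains valid in the extended reals.
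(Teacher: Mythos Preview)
Your proposal is correct and matches the paper's own treatment: the paper does not give a separate proof of this corollary, but simply notes that $\psi$-mixing of~$\bY$ ensures cross-entropy regularity (as discussed in Section~\ref{sec-cross-entropy}) and that the corollary is then an immediate consequence of Theorems~\ref{thm-Ko-gen} and~\ref{thm-Ko-psi}. Your write-up spells out exactly this combination, including the care needed when $h_{[\bX,\bY]}(x)=+\infty$.
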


\begin{corollary}
\label{cor-main-2}
	Suppose that $\P_\bX^{(n)} \ll \P_\bY^{(n)}$ for all $n\in\nn$ and that $\bY$ is $\psi$-mixing.
	Then, the longest match lengths
	\[
		L_m(x,y) := \sup\left\{ \ell \in \nn : y_{k}^{k+\ell-1} = x_1^\ell \text{ for some } k \leq m - \ell\right\}
	\]
	satisfy
	\[
		\lim_{m\to\infty} \frac{\log m}{L_m(x, y)} = h_{[\bX, \bY]}(x)
	\]
	$(\P_\bX\times \P_\bY)$-almost surely.  If, in addition, the process $\bX$ is ergodic, then
	\[
		\lim_{m\to\infty} \frac{\log m}{L_m(x, y)} =  H^{\textnormal{cross}}[\bX,\bY]
	\]
	$(\P_\bX\times \P_\bY)$-almost surely.
\end{corollary}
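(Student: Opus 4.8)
The plan is to deduce Corollary~\ref{cor-main-2} from Corollary~\ref{cor-main-1} together with the elementary combinatorial duality between the longest-match lengths $L_m$ and the waiting times $W_n$: for every $m,n\in\nn$ one has $\{L_m(x,y)\ge n\}=\{W_n(x,y)\le m-n\}$, since if $x_1^\ell$ occurs in $y$ starting at some position $k\le m-\ell$ with $\ell\ge n$ then already $x_1^n$ occurs at that same $k\le m-n$, and conversely one takes $\ell=n$. I would first note that, as $\psi$-mixing makes $\bY$ ergodic, Birkhoff's theorem and the hypothesis $\P_\bX^{(n)}\ll\P_\bY^{(n)}$ give, for $(\P_\bX\times\P_\bY)$-a.e.\ $(x,y)$, that $W_n(x,y)<\infty$ for all $n$ (for a.e.\ $x$ the cylinder $[x_1^n]$ has positive $\P_\bY$-measure, hence is entered by a.e.\ $y$); via the duality this forces $L_m(x,y)\to\infty$, so the ratios $\tfrac{\log m}{L_m}$ are well defined for $m$ large. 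It then suffices to argue pointwise on the full-measure set where both this holds and $\tfrac1n\log W_n(x,y)\to h:=h_{[\bX,\bY]}(x)\in[0,\infty]$ (Corollary~\ref{cor-main-1}), showing that this forces $\tfrac{\log m}{L_m}\to h$.

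Suppose first $0<h<\infty$ and fix $\delta\in(0,h)$. The a.s.\ eventual bounds $\mathrm{e}^{(h-\delta/2)n}\le W_n\le\mathrm{e}^{(h+\delta/2)n}$ are fed into the duality by a change of variable. Taking $n=n_m:=\lceil\tfrac{\log m}{h+\delta}\rceil$ gives $W_{n_m}\le\mathrm{e}^{(h+\delta/2)n_m}=O\!\big(m^{(h+\delta/2)/(h+\delta)}\big)$, an exponent strictly below $1$, hence $W_{n_m}\le m-n_m$ and so $L_m\ge n_m$ for all large $m$; thus $\limsup_m\tfrac{\log m}{L_m}\le h+\delta$. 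Taking instead $n=n_m':=\lceil\tfrac{\log m}{h-\delta}\rceil$ gives $W_{n_m'}\ge\mathrm{e}^{(h-\delta/2)n_m'}\ge m^{(h-\delta/2)/(h-\delta)}>m\ge m-n_m'$, hence $L_m<n_m'$ and so $L_m<\tfrac{\log m}{h-\delta}$ for all large $m$; thus $\liminf_m\tfrac{\log m}{L_m}\ge h-\delta$. Letting $\delta\downarrow0$ settles this case.

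The boundary cases use the same mechanism. If $h=\infty$: for any $K>0$ one has $W_n\ge\mathrm{e}^{Kn}$ eventually, so $L_m\ge n$ forces $\mathrm{e}^{Kn}\le W_n\le m$, i.e.\ $n\le\tfrac{\log m}{K}$; thus $L_m\le\tfrac{\log m}{K}+O(1)$, giving $\liminf_m\tfrac{\log m}{L_m}\ge K$ and hence the ratio tends to $\infty=h$. If $h=0$: for any $A>0$ put $n_m=\lceil A\log m\rceil$ and use $W_{n_m}\le\mathrm{e}^{n_m/(2A)}=O(\sqrt m)=o(m-n_m)$ to get $L_m\ge n_m\ge A\log m$ eventually, so $\limsup_m\tfrac{\log m}{L_m}\le\tfrac1A$ and the ratio tends to $0=h$.

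I expect the only real care to lie in the bookkeeping: making the trichotomy $h\in\{0\}\cup(0,\infty)\cup\{\infty\}$ uniform and correctly absorbing the harmless additive discrepancy between ``$W_n\le m$'' and ``$W_n\le m-n$'' (which is precisely where the identity $\{L_m\ge n\}=\{W_n\le m-n\}$ and the trivial bound $L_m\le m-1$ do the work). There is no delicate probabilistic estimate beyond what Corollary~\ref{cor-main-1} already provides. Finally, the second assertion is immediate: as recalled in Section~\ref{sec-cross-entropy}, $\psi$-mixing of $\bY$ makes $(\bX,\bY)$ cross-entropy regular, and when $\bX$ is in addition ergodic one has $h_{[\bX,\bY]}(x)=H^{\textnormal{cross}}[\bX,\bY]$ for $\P_\bX$-a.e.\ $x$; substituting this into the first assertion concludes.
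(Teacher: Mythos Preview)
Your proof is correct and follows precisely the route the paper has in mind: the paper gives no detailed argument for Corollary~\ref{cor-main-2}, declaring it (together with Corollary~\ref{cor-main-1}) an ``immediate consequence'' of Theorems~\ref{thm-Ko-gen} and~\ref{thm-Ko-psi}, and the standard way to make this immediate is exactly the duality $\{L_m\ge n\}=\{W_n\le m-n\}$ combined with the almost sure convergence of $\tfrac1n\log W_n$. Your treatment of the three cases $h\in(0,\infty)$, $h=0$, $h=\infty$ and the preliminary observation that $L_m\to\infty$ almost surely are all in order.
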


\begin{proof}[Proof of Theorem~\ref{thm-Ko-gen}]
	Let $t > 1$ be arbitrary. Fix $n \in \nn$ and $a \in {\cal A}^n$. We estimate
	\begin{equation}\label{milano-1}
	\begin{split}
		(\P_\bX\times\P_\bY)\left\{x_1^n = a \textnormal{ and } W_n(x, y) < t \right\}
			&\leq \sum_{k \leq \lfloor t \rfloor} (\P_\bX\times\P_\bY)\left\{x_1^n = a \textnormal{ and } W_n(x, y) = k \right\} \\
			&\leq t \P_\bX^{(n)}(a)\P_\bY^{(n)}(a),
	\end{split}
	\end{equation}
	where we  have used stationarity of~$\bY$. Note that~\eqref{milano-1} trivially holds for $0 < t \leq 1$ as well.
	Let now $\epsilon > 0$ and $\beta > 0$ be arbitrary.  If $\P_\bY^{(n)} (a)>0$, then the estimate~\eqref{milano-1} with $t = \Exp{-\epsilon n^\beta}\P_\bY^{(n)}(a)^{-1}$ gives
	\begin{equation}
	\label{ver-late}
		(\P_\bX\times\P_\bY)\left\{x_1^n = a \textnormal{ and } W_n(x,y) \P_\bY^{(n)}(x_1^n)< \Exp{-\epsilon n^\beta}\right\}
		\leq \Exp{-\epsilon n^\beta} \P_\bX^{(n)}(a).
	\end{equation}
Summing~\eqref{ver-late} over~$a \in {\cal A}^n$ with $\P_\bX^{(n)}(a)>0$ gives
\[
(\P_\bX\times\P_\bY)\left\{W_n(x,y) \P_\bY^{(n)}(x_1^n)< \Exp{-\epsilon n^\beta}\right\}
		\leq \Exp{-\epsilon n^\beta}
\]
and the result follows from  the Borel--Cantelli lemma.
\end{proof}

\begin{proof}[Proof of Theorem~\ref{thm-Ko-psi}]
  We denote by $(\psi_{\bY}(\ell))_{\ell \geq 0}$ the sequence of $\psi$-mixing coefficients of the process~$\bY$.
  By  the $\psi$-mixing assumption of the theorem,  $\lim_{\ell \rightarrow \infty} \psi_{\bY}(\ell)=0$, and so there exists $\delta>0$ and $\ell$ such that $0 \leq \psi_{\bY}(\ell) \leq \delta < 1$. Let $t > 0$ be arbitrary and $a \in \cA^n$ be such that $\P_\bX^{(n)}(a)>0$.
  Then,
	\begin{equation}
	\label{var-est-1}
	\begin{split}
		(\P_\bX\times\P_\bY)\left\{x_1^n = a \textnormal{ and } W_n(x, y) > t \right\}
			&= \P_\bX^{(n)}(a) \P_\bY\left\{ y_k^{k+n-1} \neq a \text{ for all } k \leq t\right\} \\[2mm]
			&\leq \P_\bX^{(n)}(a) \P_\bY\left\{ y_{j(n+\ell) + 1}^{j(n+\ell) + n} \neq a \text{ for all } 0 \leq j \leq J-1 \right\},
	\end{split}
	\end{equation}
	for some natural number~$J$ depending on~$t$ and~$n$ and satisfying
	$t \leq J(n+\ell) < t + n + \ell$.
	Let
	\[
		A_{j} := \left\{y_{j'(n+\ell) + 1}^{j'(n+\ell) + n} \neq a \text{ for all } 0 \leq j' \leq j-1 \right\}
	\]
	and note that, if $\P_\bY(A_j) = 0$ for some $j \leq J$, then the right-hand side of the  estimate~\eqref{var-est-1} vanishes and we need not go further. If   $\P_\bY(A_j) > 0$  for all  $j \leq J$, we rewrite~\eqref{var-est-1} as
	\begin{equation}\label{milan-back-0}
	\begin{split}
		&(\P_\bX\times\P_\bY)\left\{x_1^n = a \textnormal{ and } W_n(x, y) > t \right\} \\
			&\qquad\qquad \leq \P_\bX^{(n)}(a) \P_\bY(A_1) \prod_{j=1}^{J-1} \frac{\P_\bY(A_{j+1})}{\P_\bY(A_j)} \\
			&\qquad\qquad = \P_\bX^{(n)}(a) (1 - \P_\bY^{(n)}(a)) \prod_{j=1}^{J-1} \left( 1 - \frac{\P_\bY(A_j \cap \left\{y_{j(n+\ell)+1}^{j(n+\ell)+n} = a\right\})}{\P_\bY(A_j)}\right).
	\end{split}
	\end{equation}
	We now use the $\psi$-mixing coefficients to estimate
	\begin{equation}\label{milan-back}
		\left|\frac{\P_\bY\left(A_j \cap \left\{y_{j(n+\ell)+1}^{j(n+\ell)+n} = a\right\}\right) - \P_\bY(A_j)\P_\bY\left\{y_{j(n+\ell)+1}^{j(n+\ell)+n} = a\right\}}
		{\P_\bY(A_j)\P_\bY\left\{y_{j(n+\ell)+1}^{j(n+\ell)+n} = a\right\}}\right| \leq \psi_{\bY}(\ell) \leq \delta.
	\end{equation}
	By  stationary of~$\bY$, $\P_\bY\{ y_{j(n+\ell)+1}^{j(n+\ell)+n} = a \}=\P_\bY^{(n)}(a)$, and~\eqref{milan-back} yields
	\begin{align}
	\label{eq:almost-sld}
		\frac{\P_\bY\left(A_j \cap \left\{y_{j(n+\ell)+1}^{j(n+\ell)+n} = a\right\}\right)}{\P_\bY(A_j)}
		&\geq
		(1-\delta) \P_\bY^{(n)}(a).
	\end{align}
  Combining this estimate with~\eqref{milan-back-0} yields
	\begin{align*}
		(\P_\bX\times\P_\bY)\left\{x_1^n = a \textnormal{ and } W_n(x,y) > t \right\}
			&\leq \P_\bX^{(n)}(a)(1-\P_\bY^{(n)}(a))(1 - (1-\delta)\P_\bY^{(n)}(a))^{J-1} \\[2mm]
			&\leq \P_\bX^{(n)}(a)\delta^{-1}(1 - (1-\delta)\P_\bY^{(n)}(a))^{J}.
	\end{align*}
	Let now~$\epsilon > 0$ and $\beta > 0$ be arbitrary. Using the above estimate with $t = \Exp{\epsilon n^\beta} \P_\bY^{(n)}(a)^{-1}$ and the appropriate~$J$, we find
	\begin{align*}
		(\P_\bX\times\P_\bY)\left\{ x_1^n = a \textnormal{ and } W_n(x, y)\P_\bY^{(n)}(a) > \Exp{\epsilon n^\beta} \right\}
			&\leq \P_\bX^{(n)}(a)\delta^{-1}(1 - (1-\delta)\P_\bY^{(n)}(a))^{\frac{\Exp{\epsilon n^\beta}}{(n+\ell) \P_\bY^{(n)}(a)}}  \\[2mm]
			&\leq \P_\bX^{(n)}(a)\delta^{-1} (1-\delta)^{-1}  (n + \ell) \Exp{-\epsilon n^\beta}
	\end{align*}
	for $n$ large enough.
	We have used the basic inequalities $(1-p)^{1/p} \leq \Exp{-1}$ and $\Exp{-s} \leq s^{-1}$ for $p \in (0,1)$ and $s > 0$ respectively.
	Summing the last inequality over $a\in {\cal A}^n$ satisfying $\P_\bX^{(n)}(a)>0$ yields
	\begin{align*}
		(\P_\bX\times\P_\bY)\left\{ W_n(x, y)\P_\bY^{(n)}(x_1^n)> \Exp{\epsilon n^\beta} \right\}
			&\leq \delta^{-1} (1-\delta)^{-1}  (n + \ell) \Exp{-\epsilon n^\beta},
	\end{align*}
	and  the result follows from Borel--Cantelli lemma.
\end{proof}

\section{Main result}
\label{sec-mr}

In Kontoyiannis' proof of Theorem~\ref{thm-Ko-psi}  presented above,  the $\psi$-mixing assumption on~$\bY$ was used only in deriving the estimate~\eqref{eq:almost-sld}. It is also easy to see that, in order for the rest of the proof to go through, the values of $\ell$ and $\delta$ need not be constant in~$n$, and that other error terms could be accommodated. This motivates the introduction of the following  \emph{weak lower-decoupling} condition:

\begin{description}
	\item[(WLD)\label{it:WLD}] For every~$K \in \nn$, there exists nondecreasing sequences $(c_n)_{n \in \nn}$ and $(\tau_n)_{n\in\nn}$ of nonnegative integers satisfying   $c_n = o(n)$ and $\log \tau_n = o(n)$ and such that for every~$a \in \cA^n$ and $B \subseteq \cA^m$ the lower-decoupling inequality
		\begin{equation}
		\label{eq:LDE}
			\P_\bY\left\{y_{1}^{n} = a \text{ and } y_{n+\ell+1}^{n+\ell+m} \in B  \right\}\geq \Exp{-c_n} \P_\bY\left\{y_{1}^{n} = a\right\} \P_\bY\left\{y_1^m \in B\right\}  - \Exp{-Kn}
		\end{equation}
		holds for some (minimal) $\ell = \ell(a,B) \leq \tau_n$.

	We shall say that Condition~\ref{it:WLD} holds \emph{uniformly in}­~$K$ if  $(c_n)_{n \in \nn}$ and $(\tau_n)_{n\in\nn}$ can be chosen independent of~$K$.
	In this case,~\eqref{eq:LDE} holds in the form
	\begin{equation}
		\label{eq:LDE-strong}
			\P_\bY\left\{y_{1}^{n} = a \text{ and } y_{n+\ell+1}^{n+\ell+m} \in B  \right\}\geq \Exp{-c_n} \P_\bY\left\{y_{1}^{n} = a\right\} \P_\bY\left\{y_1^m \in B\right\}.
			\end{equation}
\end{description}

\begin{remark}
  Note that $c_n = o(n)$ and $\log \tau_n = o(n)$ if and only if
	\begin{equation}\label{eq:summable-WLD}
		\sum_{n \in \nn}(n + \tau_n) \Exp{-\epsilon n + c_n} < \infty
	\end{equation}
  for all~$\epsilon > 0$. This observation will play a role in the proofs below.
\end{remark}

\begin{remark}
\label{rem:n-or-m}
	Note that  the size of~$\ell$, called the \emph{gap} hereafter, and the quantities used for the lower bound depend on the length~$n$ of the word~$a$, which is to the left (or past) of the set~$B$. However, all the proofs below can be adapted if~\eqref{eq:LDE} is replaced with
	\begin{equation}
	\label{eq:LDE-rev}
		\P_\bY\left\{y_1^m \in B \text{ and } y_{m+\ell+1}^{m+\ell+n} = a\right\}\geq \Exp{-c_n}\P_\bY\left\{y_1^m \in B\right\} \P_\bY\left\{y_{1}^{n} = a\right\} - \Exp{-Kn},
	\end{equation}
	\emph{i.e.}\  if the reversal of~$\P_\bY$ satisfies~\eqref{eq:LDE}. The bound~\eqref{eq:LDE-rev} is technically closer to the use of the $\psi$-mixing condition by Kontoyiannis, but is less natural from the perspective of  lower decoupling.
\end{remark}

\begin{remark}
\label{rem:psi-implies-WLD}
	The $\psi$-mixing condition implies that~\ref{it:WLD} holds uniformly in~$K$, with $c_n$ and $\tau_n$ that can also be chosen independently of~$n$: take $\tau_n = \ell$ for an integer~$\ell$ such that $\psi_\bY(\ell) < 1$, and then $c_n = -\log (1 - \psi_\bY(\ell))$.
\end{remark}

Our main result is the following theorem that makes use of  Condition~\ref{it:WLD}. If Condition~\ref{it:WLD} holds uniformly in~$K$, then the  conclusions of the theorem  can be strengthened; see Remark~\ref{rem-strongWLD}. Unlike $\psi$-mixing,  Condition~\ref{it:WLD} does not ensure cross-entropy regularity of the pair~$(\bX,\bY)$ and Corollaries~\ref{cor-main-1} and~\ref{cor-main-2} need to be adapted accordingly.

\begin{theorem}
\label{thm-Ko-generalization}
  Suppose that $\P_\bX^{(n)}\ll \P_\bY^{(n)}$ for all~$n\in\nn$ and that~$\bY$ satisfies \textnormal{\ref{it:WLD}}. Then,
	\[
		\limsup_{n\to\infty} \frac{\log W_n(x, y)}{n}\leq \limsup_{n \to \infty}-\frac{\log \P_\bY^{(n)}(x_1^n)}{n}
	\]
  $(\P_\bX\times \P_\bY)$-almost surely.
\end{theorem}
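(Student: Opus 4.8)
The plan is to follow the architecture of Kontoyiannis' proof of Theorem~\ref{thm-Ko-psi}: iterate the decoupling inequality~\eqref{eq:LDE} to bound the probability that a prescribed word of length~$n$ is absent from a long window of~$\bY$, then conclude by a Borel--Cantelli argument. Write $h^{*}(x) := \limsup_{n\to\infty}-\tfrac1n\log\P_\bY^{(n)}(x_1^n)$ for the right-hand side of the claimed inequality, which is $\P_\bX$-a.s.\ well defined in~$[0,\infty]$ thanks to $\P_\bX^{(n)}\ll\P_\bY^{(n)}$. The genuinely new difficulty, which breaks the $\psi$-mixing argument, is the additive error~$\Exp{-Kn}$ in~\eqref{eq:LDE}: it prevents a single estimate that is uniform over all $a\in\cA^n$. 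To get around this I would not prove the statement directly, but instead show, for each positive rational~$\lambda$, that $\limsup_{n}\tfrac1n\log W_n(x,y)\le\lambda$ holds $(\P_\bX\times\P_\bY)$-a.s.\ on the event $\{h^{*}(x)<\lambda\}$; intersecting these countably many almost-sure statements and letting rational $\lambda\downarrow h^{*}(x)$ (the case $h^{*}(x)=\infty$ being trivial) gives the theorem. Having fixed~$\lambda$, I would apply~\ref{it:WLD} with $K:=\lceil\lambda\rceil+1$, obtaining sequences $(c_n)$ and $(\tau_n)$ with $c_n=o(n)$ and $\log\tau_n=o(n)$, and introduce the ``good'' event $G_n:=\{x:\P_\bY^{(n)}(x_1^n)>\Exp{-\lambda n}\}$; note that $\{h^{*}(x)<\lambda\}\subseteq\{x\in G_n\text{ for all large }n\}$, and that on $G_n$ one has $\P_\bY^{(n)}(x_1^n)^{-1}<\Exp{\lambda n}$.

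The core estimate is that, for every $a\in\cA^n$ with $q:=\P_\bY^{(n)}(a)>0$ and every real $t\ge1$, writing $J:=\lfloor1+\tfrac{t-1}{n+\tau_n}\rfloor$,
\[
	\P_\bY\left\{y_k^{k+n-1}\neq a\ \text{ for all integers }k\le t\right\}\ \le\ \Exp{-(J-1)\Exp{-c_n}q}\ +\ \Exp{c_n-Kn}\,q^{-1}.
\]
To prove it I would construct, \emph{from right to left}, positions $p_1>p_2>\dots>p_J$ in $\{1,\dots,\lfloor t\rfloor\}$ and events $A_j:=\bigcap_{i=1}^{j}\{y_{p_i}^{p_i+n-1}\neq a\}$: set $p_1:=\lfloor t\rfloor$, and, having built $A_j$, apply~\eqref{eq:LDE} with the word~$a$ as the anchor and a suitable translate of the event~$A_j$ as the set~$B$, obtaining a gap $\ell_j=\ell(a,\cdot)\le\tau_n$, and put $p_{j+1}:=p_j-n-\ell_j$. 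This choice of gap is exactly what places $\{y_{p_{j+1}}^{p_{j+1}+n-1}=a\}$ and $A_j$ at distance~$\ell_j$, so that stationarity turns~\eqref{eq:LDE} into $\P_\bY(A_j\cap\{y_{p_{j+1}}^{p_{j+1}+n-1}=a\})\ge\Exp{-c_n}q\,\P_\bY(A_j)-\Exp{-Kn}$, hence the affine recursion
\[
	\P_\bY(A_{j+1})\ \le\ (1-\Exp{-c_n}q)\,\P_\bY(A_j)\ +\ \Exp{-Kn}.
\]
Since $p_1-p_J=\sum_{j=1}^{J-1}(n+\ell_j)\le(J-1)(n+\tau_n)\le t-1$, all $p_i$ lie in $\{1,\dots,\lfloor t\rfloor\}$, so the event on the left of the core estimate is contained in~$A_J$; solving the recursion from $\P_\bY(A_1)\le1$ and using $1-s\le\Exp{-s}$ yields the bound.

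To conclude, fix a positive rational~$\epsilon$ and take $t:=\Exp{(\lambda+\epsilon)n}$. By independence of~$\bX$ and~$\bY$ and the core estimate,
\[
	(\P_\bX\times\P_\bY)\big(\{x\in G_n\}\cap\{\log W_n(x,y)>(\lambda+\epsilon)n\}\big)\ \le\ \sum_{a}\P_\bX^{(n)}(a)\big(\Exp{-(J-1)\Exp{-c_n}q}+\Exp{c_n-Kn}q^{-1}\big),
\]
where the sum runs over $a\in\cA^n$ with $\P_\bX^{(n)}(a)>0$ and $q:=\P_\bY^{(n)}(a)>\Exp{-\lambda n}$. On this range $q>\Exp{-\lambda n}$, while $J-1\ge\tfrac12\Exp{(\lambda+\epsilon)n}/(n+\tau_n)-1$ for~$n$ large, so $(J-1)\Exp{-c_n}q\ge\Exp{\epsilon n-c_n-\log(2(n+\tau_n))}-1\ge\Exp{\epsilon n/2}$ eventually (using $c_n+\log(n+\tau_n)=o(n)$); hence the first term is at most $\Exp{1-\Exp{\epsilon n/2}}$, uniformly in~$a$. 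The second term is $<\Exp{c_n-(K-\lambda)n}$ with $K-\lambda\ge1$. Summing against $\P_\bX^{(n)}(a)$ (total mass $\le1$) and then over~$n$ gives a convergent series, the second contribution being summable by the observation that $c_n=o(n)$ and $\log\tau_n=o(n)$ are equivalent to~\eqref{eq:summable-WLD}. The Borel--Cantelli lemma then shows that $(\P_\bX\times\P_\bY)$-a.s.\ only finitely many~$n$ satisfy both $x\in G_n$ and $\log W_n>(\lambda+\epsilon)n$; since $x\in G_n$ eventually on $\{h^{*}(x)<\lambda\}$, this forces $\limsup_n\tfrac1n\log W_n\le\lambda+\epsilon$ there, and letting rational $\epsilon\downarrow0$ and then $\lambda\downarrow h^{*}(x)$ finishes the proof.

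The step I expect to require the most care, and the real obstacle, is the additive error~$\Exp{-Kn}$: one cannot simply replace $(1-\delta)$ by $\Exp{-c_n}$ in the proof of Theorem~\ref{thm-Ko-psi}, because after summing over~$a$ the term $\Exp{c_n-Kn}q^{-1}$ acquires the factor $\sum_a\P_\bX^{(n)}(a)/\P_\bY^{(n)}(a)$, which need not even be finite when~$\cA$ is infinite. The localization to~$G_n$, together with the freedom to take~$K$ as large as we wish (here larger than~$\lambda$), is precisely what tames it. A secondary technical point is that the gap~$\ell_j$ supplied by~\ref{it:WLD} depends on the ever-growing conditioning event~$A_j$; this is why the blocks must be built from right to left: in~\eqref{eq:LDE} the word~$a$ sits to the left of the flexible set~$B$, so the freshly added copy of~$a$ must be placed to the left of the accumulated event~$A_j$, and only the uniform bound $\ell_j\le\tau_n$ is then available to control the total span of the construction.
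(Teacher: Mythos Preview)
Your proposal is correct and follows essentially the same architecture as the paper's proof: localize to words~$a$ with $\P_\bY^{(n)}(a)$ not too small (so that the additive $\Exp{-Kn}$ in~\ref{it:WLD} becomes negligible), iteratively place copies of~$a$ with adaptive gaps supplied by~\ref{it:WLD}, and conclude by Borel--Cantelli. The only notable difference is in the bookkeeping for the additive error: the paper introduces an auxiliary parameter~$\eta$, splits into the cases $\P_\bY(A_J)\le\Exp{-\eta n}$ and $\P_\bY(A_j)>\Exp{-\eta n}$ for all~$j$, and in the latter absorbs the error into the multiplicative factor to get a clean telescoping product $(1-(\Exp{-c_n}-\Exp{-\eta n})q)^J$; you instead solve the affine recursion $\P_\bY(A_{j+1})\le(1-\Exp{-c_n}q)\P_\bY(A_j)+\Exp{-Kn}$ directly and bound its fixed point $\Exp{c_n-Kn}q^{-1}$ on~$G_n$, which is arguably cleaner and avoids the case split. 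Your right-to-left construction with fixed positions and the paper's construction that adds a block at position~1 and shifts the accumulated event to the right are equivalent by stationarity.
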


\begin{corollary}
\label{cor-main-3}
	Suppose that $\P_\bX^{(n)}\ll \P_\bY^{(n)}$ for all $n\in\nn$, that $\bY$ satisfies \textnormal{\ref{it:WLD}}, and that the pair $(\bX, \bY)$ is cross-entropy regular.
	Then,
	the conclusions of Corollary~\ref{cor-main-1} on almost sure convergence of~$W_n$
	and Corollary~\ref{cor-main-2} on almost sure convergence of~$L_m$ hold.
\end{corollary}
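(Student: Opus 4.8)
The plan is to obtain Corollary~\ref{cor-main-3} purely by combining the one-sided asymptotics already in hand — the lower bound of Theorem~\ref{thm-Ko-gen} and the upper bound of Theorem~\ref{thm-Ko-generalization} — with the cross-entropy regularity hypothesis, and then to pass from the resulting statement about $W_n$ to the one about $L_m$ via the standard duality between waiting times and longest matches. No new estimate is required; the work is bookkeeping with limits valued in $[0,\infty]$.

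First I would fix a set of pairs $(x,y)$ of full $(\P_\bX\times\P_\bY)$-measure on which simultaneously: (i) $\liminf_{n\to\infty}\frac1n\log\!\big[W_n(x,y)\P_\bY^{(n)}(x_1^n)\big]\ge 0$ (Theorem~\ref{thm-Ko-gen}), so that this quantity is eventually $\ge -1$; (ii) $\limsup_{n\to\infty}\frac{\log W_n(x,y)}{n}\le \limsup_{n\to\infty}\big({-}\tfrac1n\log\P_\bY^{(n)}(x_1^n)\big)$ (Theorem~\ref{thm-Ko-generalization}); and (iii) $\P_\bX^{(n)}(x_1^n)>0$ for all $n$ and the limit $h:=h_{[\bX,\bY]}(x)=\lim_{n\to\infty}\big({-}\tfrac1n\log\P_\bY^{(n)}(x_1^n)\big)$ exists in $[0,\infty]$ (cross-entropy regularity). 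All three hypotheses required for (i)--(iii) — $\P_\bX^{(n)}\ll\P_\bY^{(n)}$, Condition~\ref{it:WLD} on~$\bY$, and cross-entropy regularity of $(\bX,\bY)$ — are assumed in the corollary.

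On this set I would split $\frac{\log W_n}{n}=\frac1n\log\!\big[W_n\P_\bY^{(n)}(x_1^n)\big]+\big({-}\tfrac1n\log\P_\bY^{(n)}(x_1^n)\big)$. The second summand tends to $h$ by~(iii); together with~(i) this gives $\liminf_n\frac{\log W_n}{n}\ge h$ (read as $+\infty$ if $h=\infty$, which is legitimate because the first summand is eventually $\ge -1$), and together with~(ii) it gives $\limsup_n\frac{\log W_n}{n}\le h$. Hence $\lim_n\frac{\log W_n(x,y)}{n}=h_{[\bX,\bY]}(x)$ $(\P_\bX\times\P_\bY)$-a.s.; in particular, when $h<\infty$, estimate~(ii) already forces $W_n(x,y)<\infty$ for all large~$n$, so $\log W_n$ causes no trouble. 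If moreover $\bX$ is ergodic, cross-entropy regularity makes $h_{[\bX,\bY]}$ $\P_\bX$-a.s.\ equal to $H^{\textnormal{cross}}[\bX,\bY]$, so the limit is $H^{\textnormal{cross}}[\bX,\bY]$ a.s. This is the assertion of Corollary~\ref{cor-main-1}.

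For the $L_m$ statement, I would use the elementary identity $\{L_m(x,y)\ge n\}=\{W_n(x,y)\le m-n\}$ for $m>n$ (a match of length $\ge n$ ending before position $m$ restricts to a length-$n$ match before position $m-n$, and conversely), which exhibits $L_m(x,y)$ as essentially the largest $n$ with $W_n(x,y)\le m-n$. Granted $\lim_n\frac{\log W_n}{n}=h$, for fixed small $\delta>0$ and $m$ large the bound $W_n\le m-n$ holds for all $n\lesssim\frac{\log m}{h+\delta}$ and fails for all $n\gtrsim\frac{\log m}{h-\delta}$ when $h\in(0,\infty)$ (using the sandwich $\Exp{(h-\delta)n}\le W_n\le\Exp{(h+\delta)n}$, and only the relevant one-sided bound when $h\in\{0,\infty\}$); letting $\delta\downarrow 0$ yields $\lim_{m\to\infty}\frac{\log m}{L_m(x,y)}=h$, with $h$ replaced by $H^{\textnormal{cross}}[\bX,\bY]$ when $\bX$ is ergodic — exactly as Corollary~\ref{cor-main-2} is deduced from Corollary~\ref{cor-main-1}. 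The only delicate points are routine: carrying the degenerate cases $h=0$ and $h=\infty$ through this duality, handling the additive splitting of $\frac{\log W_n}{n}$ when $h=\infty$, and checking almost-sure finiteness of the quantities where needed. There is no substantive obstacle beyond Theorems~\ref{thm-Ko-gen} and~\ref{thm-Ko-generalization} themselves.
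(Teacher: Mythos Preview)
Your proposal is correct and follows exactly the route the paper intends: the paper states Corollary~\ref{cor-main-3} without proof, having already noted that Corollaries~\ref{cor-main-1} and~\ref{cor-main-2} are ``immediate consequences'' of Theorems~\ref{thm-Ko-gen} and~\ref{thm-Ko-psi}, so the analogous combination of Theorems~\ref{thm-Ko-gen} and~\ref{thm-Ko-generalization} with cross-entropy regularity is precisely what is meant. Your additive splitting of $\tfrac1n\log W_n$, the handling of the $h=\infty$ case via the eventual lower bound on the first summand, and the standard duality $\{L_m\ge n\}=\{W_n\le m-n\}$ to pass to~$L_m$ are all correct and constitute the expected bookkeeping.
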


\begin{proof}[Proof of Theorem~\ref{thm-Ko-generalization}]
	We proceed in three steps using a fixed constant~$\eta > 0$ on which nothing particular is assumed.
	\begin{description}
		\item[Step 1: First reduction.]  If $x \in \Omega$ is such that
		\[
			\limsup_{n\to\infty}- \frac{\log \P_\bY^{(n)}(x_1^n)}{n} = \infty,
		\]
		then the proposed inequality trivially holds. Hence, by countable additivity, we need only show that, for all~$\epsilon > 0$ and~$K \in \nn$, the inequality
		\begin{equation}
		\label{eq:with-eps}
			\frac{\log W_n(x, y)}{n} \leq -\frac{\log \P_\bY^{(n)} (x_1^n)}{n}+ \epsilon
		\end{equation}
		holds eventually almost surely when restricted to the set of~$x$ for which
		\[
			\limsup_{n\to\infty} -\frac{\log \P_\bY^{(n)}(x_1^n)}{n} \leq K - 3\eta.
		\]

		Also note that if this last limit superior is bounded above by $K - 3\eta$, then there exists an index after which the normalized logarithm is bounded by~$K - 2\eta$. Arguing once again by countable additivity, it suffices to show that, for all~$\epsilon > 0$ and $K, N \in \nn$, the inequality~\eqref{eq:with-eps} holds eventually almost surely when restricted to the set of sequences~$x$ for which
		\[
			-\frac{\log \P_\bY^{(n)}(x_1^n)}{n} \leq K - 2\eta
		\]
		holds	for all~$n \geq N$. Throughout the next steps, $N, K$ and $\epsilon$ are fixed but arbitrary.

		\item[Step 2: Second reduction.] In view of Step 1, it follows from the Borel--Cantelli lemma that  it suffices to show that the bound
		\begin{equation}
		\label{milan-af-1}
		\begin{split}
				(\P_\bX\times\P_\bY)&\left\{ W_n(x, y)\P_\bY^{(n)}(x_1^n) > \Exp{\epsilon n} \text{ and } -\tfrac 1n \log \P_\bY^{(n)}(x_1^n) \leq K - 2\eta \right\}\\[1mm]
				&\leq \Exp{-\eta n} + 2\frac{n + \tau_n}{\Exp{-c_n} - \Exp{-\eta n}} \Exp{-\epsilon n}
		\end{split}
		\end{equation}
		holds	for all~$n \geq N$ large enough. The precise form of the right-hand side is not particularly important, as long as it is summable in~$n$ (recall~\eqref{eq:summable-WLD}). This can further be reduced to showing that
		\begin{equation}
		\label{eq:key-exp}
			(\P_\bX\times\P_\bY)\left\{ x_1^n = a \textnormal{ and } W_n(x,y)\P_\bY^{(n)}(x_1^n) > \Exp{\epsilon n} \right\}
			\leq \P_\bX^{(n)}(a)\left(\Exp{-\eta n} +  2\frac{n + \tau_n}{\Exp{-c_n} - \Exp{-\eta n}} \Exp{-\epsilon n} \right)
		\end{equation}
		for all~$n \geq N$ large enough and for all~$a \in \cA^n$ such that
		\begin{equation}
		\label{eq:decay-bound}
			\P_\bY^{(n)}(a) > \Exp{-Kn + 2\eta n}.
		\end{equation}
    Indeed, \eqref{milan-af-1} follows by summing~\eqref{eq:key-exp} over $a\in {\cal A}^n$ satisfying $\P_\bX^{(n)}(a)>0$
	\item[Step 3: Proof of the key bound~\eqref{eq:key-exp}.] Let $n \geq N$ and $a \in \cA^n$ satisfying~\eqref{eq:decay-bound}
	be arbitrary.
	In view of \ref{it:WLD} and stationarity of~$\bY$, we can inductively pick a sequence~$(\ell^{(j)})_{j\in\nn}$ bounded by~$\tau_n$ such that the sets
	\[
		A_{j} := \left\{y_{L(j) - L(j') + 1}^{L(j) - L(j') + n} \neq a \text{ for all } 1 \leq j' \leq j \right\}
	\]
	satisfy
	\[
		\P_\bY\left(\left\{y_1^n = a \right\} \cap \varphi^{-n - \ell^{(j)}}(A_{j})\right) \geq \Exp{-c_n} \P_\bY^{(n)}(a)\P_\bY(A_{j}) - \Exp{-K n}
	\]
	for all~$j$, where $L(j) := nj + \sum_{j'=1}^{j-1} \ell^{(j')}$. If $\P_\bY(A_j)>0$, we can rewrite this inequality as
	\begin{equation}
	\label{eq:key-cons-SLD}
		1 - \frac{\P_\bY\left(\left\{y_1^n = a \right\} \cap \varphi^{-n - \ell^{(j)}} (A_j)\right)}{\P_\bY(A_j)} \leq 1 - \Exp{-c_n} \P_\bY^{(n)}(a) + \frac{\Exp{-Kn}}{\P_\bY(A_j)}.
	\end{equation}

	Let now $t>0$. We  estimate
	\begin{align*}
		(\P_\bX\times\P_\bY) \left\{ x_1^n = a \textnormal{ and } W_n(x, y) > t \right\}
			&= \P_\bX^{(n)}(a) \P_\bY\left\{ y_k^{k+n-1} \neq a \text{ for all } k \leq t\right\} \\
			&\leq \P_\bX^{(n)}(a) \P_{\bY}(A_{J}),
	\end{align*}
	where~$J$ is a natural number depending on~$n$, $a$ and $t$ in such a way that
	\[
		J({n+\tau_n}) \geq t.
	\]
	If $\P_\bY(A_{J}) \leq \Exp{-\eta n}$, then the estimate~\eqref{eq:key-exp} holds, and in  the remaining part of the proof we assume that $\P_\bY(A_{J}) > \Exp{-\eta n}$. Then~$\P_\bY(A_{j}) > \Exp{- \eta n}$ for each $1 \leq j \leq J$ and, together with~\eqref{eq:decay-bound}, this implies
	\begin{equation}
	\label{eq:old-case-ii}
		\frac{\Exp{-Kn}}{\P_\bY(A_j)} \leq \Exp{-Kn + \eta n} \leq \Exp{-\eta n}\P_\bY^{(n)}(a).
	\end{equation}
 	Telescoping and using the  stationarity of~$\bY$, we write
	\begin{align*}
		&(\P_\bX\times\P_\bY)\left\{ x_1^n = a \textnormal{ and } W_n(x,y) > t \right\} \\
			&\qquad\qquad
			\leq \P_\bX^{(n)}(a) \P_\bY(A_1) \prod_{j=1}^{J-1} \frac{\P_\bY(A_{j+1})}{\P_\bY(A_j)} \\
			&\qquad\qquad
			= \P_\bX^{(n)}(a) \left( 1 - \P_\bY^{(n)}(a) \right) \prod_{j=1}^{J-1} \left( 1 - \frac{\P_\bY(\left\{y_1^n = a \right\} \cap \varphi^{-n - \ell^{(j)}} (A_j))}{\P_\bY(A_j)}\right).
	\end{align*}
	Combining this estimate with inequalities~\eqref{eq:key-cons-SLD} and~\eqref{eq:old-case-ii}, we derive
	\begin{align*}
		&(\P_\bX\times\P_\bY)\left\{ x_1^n = a \textnormal{ and } W_n(x,y) > t \right\} \\
			&\qquad\qquad \leq \P_\bX^{(n)}(a) \left(1-\P_\bY^{(n)}(a)\right) \left(1 - (\Exp{-c_n} - \Exp{-\eta n})\P_\bY^{(n)}(a)\right)^{J-1} \\
			&\qquad\qquad \leq 2\P_\bX^{(n)}(a) \left(1 - (\Exp{-c_n} - \Exp{-\eta n})\P_\bY^{(n)}(a)\right)^{J}
	\end{align*}
	for all~$n$ large enough.\footnote{If $c_n \geq 1$ for all~$n$ large enough, then $(\Exp{-c_n}-\Exp{-2\eta n})\P_{\bX}^{(n)}(a) < \Exp{-c_n} < \tfrac 12$ for all~$n$ large enough. However, there is no loss of generality in assuming that $c_n \geq 1$.}
	Finally, using  this last estimate with $t = \Exp{\epsilon n} \P_\bY^{(n)}(a)^{-1}$ and the appropriate~$J$, we find that
	\begin{align*}
		&\P_\bX\times \P_\bY\left\{ x_1^n = a \textnormal{ and } W_n(x, y)\P_\bY^{(n)}(x_1^n) > \Exp{\epsilon n} \right\}\\
			&\qquad\qquad \leq 2\P_\bX^{(n)}(a) \left(1 - (\Exp{-c_n} - \Exp{-\eta n})\P_\bY^{(n)}(a)\right)^{\frac{\Exp{\epsilon n}}{(n+\tau_n) \P_\bY^{(n)}(a)}}  \\[1mm]
			&\qquad\qquad \leq 2\P_\bX^{(n)}(a) \frac {n + \tau_n}{\Exp{-c_n} - \Exp{-\eta n}} \Exp{-\epsilon n}
	\end{align*}
	for $n$ large enough, and \eqref{eq:key-exp} follows.
	We have used the basic inequalities $(1-p)^{1/p} \leq \Exp{-1}$ and $\Exp{-s} \leq s^{-1}$ for $p \in (0,1)$ and $s > 0$ respectively.
	\qedhere
	\end{description}

\end{proof}

\section{Weak lower decoupling}
\label{sec-WLD}

Condition~\ref{it:WLD} is an example of a so-called \emph{decoupling condition}, of which many types are available in the literature on large deviations and statistical mechanics. Either explicit or implicit use of such conditions is ubiquitous in statistical mechanics:
this is seen, for example, in the proof of the existence of pressure, Ruelle's proof of the absence of phase transitions for
interactions with bounded surface energy, the study of the Dobrushin--Lanford--Ruelle equilibrium condition, and in many other places; see \emph{e.g.}~\cite{Si} or any other monograph on the subject.
The use of decoupling conditions in the context of Ruelle--Lanford functions is at the intersection of large deviation theory and statistical mechanics; see \cite{LePf, Pf}.  Our formulation of~\ref{it:WLD} is motivated by the use of decoupling conditions in the theory of large deviations and, more specifically, by the~\ref{it:S-} condition of \cite[\S{2}]{BD} and the \emph{selective lower-decoupling} condition~\ref{it:SLD}  of~\cite[\S{2.2}]{CJPS}.  In this section, we show that both of these conditions are stronger than~\ref{it:WLD}. In particular, all examples discussed in \cite{BD, CJPS} satisfy~\ref{it:WLD}; we will return to this point in Section~\ref{sec-examples}.

We start with Condition~\ref{it:SLD} of~\cite{CJPS}:
\begin{description}
	\item[(SLD)\label{it:SLD}] There exists an $o(n)$-sequences $(c'_n)_{n \in \nn}$ and $(\tau_n)_{n\in\nn}$ with the following property: for every $a \in \cA^n$ and $b \in \cA^m$, there exists $\ell = \ell(a,b) \leq \tau_n$ such that
	\begin{equation}
	\label{eq:SLD-ineq}
		\P_\bY\left\{y_1^n = a \text{ and } y_{n+\ell+1}^{n+\ell+m} = b\right\}\geq \Exp{-c'_n}\P_\bY\left\{y_{1}^{n} = a\right\} \P_\bY\left\{y_1^m = b\right\} .
	\end{equation}
\end{description}

\begin{proposition}
	If Condition~\textnormal{\ref{it:SLD}} holds, then Condition~\textnormal{\ref{it:WLD}} holds uniformly in~$K$.
\end{proposition}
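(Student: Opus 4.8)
The plan is to derive the set-valued lower-decoupling inequality~\eqref{eq:LDE-strong} directly from the singleton inequality~\eqref{eq:SLD-ineq}, by partitioning the target set~$B$ according to the gap length and applying a pigeonhole argument. Because the error term~$\Exp{-Kn}$ in~\eqref{eq:LDE} will never be needed, the conclusion will come out uniformly in~$K$ at no extra cost.

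First I would do some harmless bookkeeping. Replacing~$\tau_n$ by $\max_{k \leq n}\lceil \tau_k\rceil$, I may assume that~$(\tau_n)_{n\in\nn}$ is a nondecreasing sequence of nonnegative integers; the running maximum of an $o(n)$-sequence is again $o(n)$, so $\tau_n = o(n)$ and in particular $\log \tau_n = o(n)$. For each pair~$(a,b)$ with $a \in \cA^n$ and $b \in \cA^m$, fix once and for all a value $\ell(a,b) \in \{0,1,\dots,\tau_n\}$ for which~\eqref{eq:SLD-ineq} holds. Now fix $n, m \in \nn$, $a \in \cA^n$ and $B \subseteq \cA^m$, and for each $\ell \in \{0,\dots,\tau_n\}$ set $B_\ell := \{b \in B : \ell(a,b) = \ell\}$, so that $B$ is the disjoint union of the~$B_\ell$. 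Summing~\eqref{eq:SLD-ineq} over $b \in B_\ell$ — a legitimate operation since all terms are nonnegative and the sum is bounded by $\P_\bY\{y_1^m \in B\} \leq 1$ — gives, for each~$\ell$,
\[
\P_\bY\left\{y_1^n = a \text{ and } y_{n+\ell+1}^{n+\ell+m} \in B_\ell\right\}\geq \Exp{-c'_n}\,\P_\bY\left\{y_1^n = a\right\}\,\P_\bY\left\{y_1^m \in B_\ell\right\}.
\]
Since $\sum_{\ell=0}^{\tau_n}\P_\bY\{y_1^m \in B_\ell\} = \P_\bY\{y_1^m \in B\}$ and there are $\tau_n+1$ summands, there is an index $\ell^\ast \leq \tau_n$ with $\P_\bY\{y_1^m \in B_{\ell^\ast}\} \geq (\tau_n+1)^{-1}\,\P_\bY\{y_1^m \in B\}$. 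As $B_{\ell^\ast} \subseteq B$, this yields
\[
\P_\bY\left\{y_1^n = a \text{ and } y_{n+\ell^\ast+1}^{n+\ell^\ast+m} \in B\right\}\geq \frac{\Exp{-c'_n}}{\tau_n+1}\,\P_\bY\left\{y_1^n = a\right\}\,\P_\bY\left\{y_1^m \in B\right\},
\]
which is~\eqref{eq:LDE-strong} with gap~$\ell^\ast$ and constant $\Exp{-c_n}$, where $c_n := \big\lceil c'_n + \log(\tau_n+1)\big\rceil$; passing to $\max_{k\leq n}c_k$ makes $(c_n)$ nondecreasing and integer-valued, and $c_n = o(n)$ because $c'_n = o(n)$ and $\log(\tau_n+1) = o(n)$. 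Choosing the minimal such~$\ell^\ast$ matches the phrasing of~\ref{it:WLD}. Since $(c_n)$ and $(\tau_n)$ were constructed with no reference to~$K$, and~\eqref{eq:LDE-strong} implies~\eqref{eq:LDE} for every $K \in \nn$, Condition~\ref{it:WLD} holds uniformly in~$K$.

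The argument is essentially routine, so I do not expect a genuine obstacle. The only point requiring a little care is the bookkeeping that converts the $o(n)$-sequences of~\ref{it:SLD} into nondecreasing, integer-valued $o(n)$-sequences (the running-maximum trick handles this), together with the harmless observation that the countable sum over $b \in B_\ell$ converges since $\P_\bY\{y_1^m \in B\} \leq 1$.
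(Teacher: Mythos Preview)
Your argument is correct and is essentially the same as the paper's: partition $B$ into the sets $B_\ell=\{b\in B:\ell(a,b)=\ell\}$, pigeonhole to find $\ell^\ast$ with $\P_\bY\{y_1^m\in B_{\ell^\ast}\}\geq(\tau_n+1)^{-1}\P_\bY\{y_1^m\in B\}$, and sum~\eqref{eq:SLD-ineq} over $b\in B_{\ell^\ast}$ to obtain~\eqref{eq:LDE-strong} with $c_n=c'_n+\log(\tau_n+1)$. You are in fact a bit more scrupulous than the paper about the bookkeeping (passing to running maxima to make the sequences nondecreasing and integer-valued, as required in the statement of~\ref{it:WLD}).
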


\begin{proof}
	Fix  $a \in {\cal A}^n$ satisfying $\P_\bY^{(n)}(a)>0$, and $B \subseteq \cA^m$. By a union bound, there exists $\ell' = \ell'(a,B) \leq \tau_n$ such that the set
	\[
		B' = \left\{ b \in B : \ell(a,b) = \ell'\right\}
	\]
	satisfies
	\[
		\P_\bY\{y_1^m \in B'\} \geq \frac{1}{\tau_n + 1} \P_\bY\{y_1^m \in B\}.
	\]
	Condition~\ref{it:SLD} then allows us to derive the following bound:
	\begin{align*}
    \P_\bY\{y_1^n = a \text{ and } y_{n+\ell'+1}^{n+\ell'+m} \in B\}
			&\geq
        \P_\bY\{y_1^n = a \text{ and } y_{n+\ell'+1}^{n+\ell'+m} \in B'\} \\
			&= \sum_{b \in B'}
        \P_\bY\{y_1^n = a \text{ and } y_{n+\ell(a,b)+1}^{n+\ell(a,b)+m} = b\}\\
			&\geq \sum_{b \in B'}  \Exp{-c'_n} \P_\bY\{y_1^n = a\}\P_\bY\{y_1^m = b\} \\
			&= \Exp{-c'_n} \P_\bY\{y_1^n = a\} \P_\bY\{y_1^m \in B'\}\\
			&\geq \frac{\Exp{-c'_n}}{\tau_n+1} \P_\bY\{y_1^n = a\} \P_\bY\{y_1^m \in B\}.
	\end{align*}
	Hence, Condition~\ref{it:WLD} holds uniformly in~$K$, with $c_n = c'_n + \log (\tau_n + 1)$.
\end{proof}

We now turn to the~\ref{it:S-} condition introduced in~\cite[\S{2}]{BD}. There, it is interpreted as arising from $\psi_-$-mixing\footnote{What Bryc and Dembo introduce as ``$\psi_-$-mixing'' in~\cite[\S{3}]{BD} is also known as ``$\psi'$-mixing''; see \emph{e.g.}~\cite[\S{2}]{Br1}.} ``except on a small set''.

\begin{description}
	\item[(S\textsubscript{--})\label{it:S-}] For all~$K \in \nn$, there exists a nondecreasing sequence~$(c''_n)_{n \in \nn}$ satisfying
	\begin{equation}
	\label{eq:ham-summability}
		\sum_{n\in\nn} \frac{c''_n}{n(n+1)} < \infty
	\end{equation}
	and with the following property. For all~$A \subseteq \cA^{m_1}$,  $B \subseteq \cA^{m_2}$,  we have
	\[
    \P_\bY\{y_1^{m_1} \in A \text{ and } y_{m_1+c''_n+1}^{m_1+c''_n+m_2} \in B\} \geq \Exp{-c''_n} \P_\bY\{y_1^{m_1} \in A\} \P_\bY\{y_1^{m_2} \in B\} - \Exp{-Kn}.
	\]
\end{description}
By taking $\tau_n = c_n = c''_n$, the following lemma gives that Condition~\ref{it:S-} implies Condition~\ref{it:WLD}.
\begin{lemma}
	Every nondecreasing sequence $(c''_n)_{n \in \nn}$ satisfying the summability condition~\eqref{eq:ham-summability} is~$o(n)$.
\end{lemma}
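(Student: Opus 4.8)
The plan is to exploit the monotonicity of $(c''_n)_{n\in\nn}$ to turn the weighted summability condition~\eqref{eq:ham-summability} into a bound on short tail sums, and then to read off $c''_n = o(n)$ from the convergence of the series. Recall that in the setting of~\ref{it:S-} the sequence $(c''_n)$ consists of nonnegative integers, so throughout we may and do assume $c''_n \geq 0$.

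First I would fix $n \in \nn$ and, using that $c''_k \geq c''_n$ for every $k \geq n$ together with the telescoping identity $\frac{1}{k(k+1)} = \frac 1k - \frac{1}{k+1}$, bound
\[
	\sum_{k=n}^{2n} \frac{c''_k}{k(k+1)} \geq c''_n \sum_{k=n}^{2n} \frac{1}{k(k+1)} = c''_n\left(\frac 1n - \frac{1}{2n+1}\right).
\]
An elementary computation gives $\frac 1n - \frac{1}{2n+1} = \frac{n+1}{n(2n+1)} \geq \frac{1}{2n}$ for every $n \in \nn$ (equivalently $2n(n+1) \geq n(2n+1)$, i.e.\ $2n+2 \geq 2n+1$), so that
\[
	\frac{c''_n}{2n} \leq \sum_{k=n}^{2n} \frac{c''_k}{k(k+1)} \leq \sum_{k=n}^{\infty} \frac{c''_k}{k(k+1)}.
\]

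The conclusion is then immediate: by~\eqref{eq:ham-summability} the series $\sum_{k\in\nn} \frac{c''_k}{k(k+1)}$ converges, hence its tails $\sum_{k \geq n} \frac{c''_k}{k(k+1)}$ tend to $0$ as $n \to \infty$, and therefore so does $\frac{c''_n}{2n}$; that is, $c''_n = o(n)$. I do not anticipate any real obstacle here. The only points requiring a small amount of care are the elementary estimate $\frac 1n - \frac{1}{2n+1} \geq \frac{1}{2n}$ and the observation that the hypothesis~\eqref{eq:ham-summability} is precisely what forces the relevant tail sums to vanish. (The same argument works with $2n$ replaced by any fixed integer multiple $\lambda n$, $\lambda > 1$, which is occasionally convenient but not needed here.)
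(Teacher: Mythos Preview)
Your argument is correct. Both your proof and the paper's rest on the same two ingredients\,---\,monotonicity of $(c''_n)$ to replace $c''_k$ by a constant on a block, and the telescoping identity $\tfrac{1}{k(k+1)} = \tfrac 1k - \tfrac{1}{k+1}$\,---\,but they are organized differently. The paper argues by contrapositive: assuming $c''_n \neq o(n)$, it extracts a subsequence $(n_k)$ with $n_k > 2n_{k-1}$ and $c''_{n_k} > \delta n_k$, and shows that the blocks $[n_k, n_{k+1})$ each contribute at least $\tfrac{\delta}{2}$ to the series, forcing divergence. You instead bound the tail $\sum_{k \geq n}$ from below by the block $\sum_{k=n}^{2n}$, obtaining directly $\tfrac{c''_n}{2n} \leq \sum_{k \geq n} \tfrac{c''_k}{k(k+1)} \to 0$. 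Your route is shorter and avoids the subsequence extraction; the paper's route has the mild advantage of making the failure mode (infinitely many blocks of fixed positive mass) more explicit. Either way, the content is the same.
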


\begin{proof}
	We will prove the contrapositive. Let $n_0 = 1$ and suppose that $c''_n$ is \emph{not} $o(n)$. Then, there exists $\delta > 0$ small enough that we can find a sequence $(n_k)_{k \in \nn}$ of natural numbers with the property that
		$n_{k}  > 2 n_{k-1} $
		and
		$c''_{n_k} > \delta n_k$
	for all $k \in \nn$.
	Then,
	\begin{align*}
		\sum_{n = 1}^{n_K} \frac{c''_n}{n(n+1)}
			&= \sum_{k = 0}^{K-1} \sum_{n = n_k}^{n_{k+1}-1} \frac{c''_n}{n(n+1)} \\
			&\geq \sum_{k = 0}^{K-1} c''_{n_k} \sum_{n = n_k}^{n_{k+1}-1} \frac{1}{n(n+1)}
			= \sum_{k=0}^{K-1} \frac{c''_{n_k}}{n_k} \left(1 - \frac{n_k}{n_{k+1}}\right) \\
			&\geq \delta \frac{K}{2},
	\end{align*}
	and we see that the summability condition~\eqref{eq:ham-summability} fails.
\end{proof}
It is important to note that in the usual applications, lower-decoupling conditions are also accompanied by upper-decoupling conditions;  see, for example,  Condition~(S\textsubscript{+}) in~\cite[\S{2}]{BD} and Condition~(SSD) in~\cite[\S{2.2}]{CJPS}.
From this point of view, our main results, Theorem~\ref{thm-Ko-generalization} and Corollary~\ref{cor-main-3}, are asymmetric since only a lower-decoupling condition is required. In many examples, however, $\P_\bY$ will satisfy some type of lower and some type of upper decoupling, and the upper-decoupling condition is often convenient for verifying cross-entropy regularity; see Sections~\ref{sec-cross-entropy} and~\ref{sec-examples}.

\begin{remark}\label{rem-ctou}
  For further comparison, we mention that Conditions~(S$_{\pm}$) of \cite{BD} hold if $\bY$ is hyper-exponential $\alpha$-mixing in the sense that
  for some $\delta >0$ the $\alpha$-mixing coefficients of~$\bY$ satisfy
  \[
    \lim_{\ell \rightarrow \infty}\frac{\log \alpha_\bY(\ell)}{\ell (\log \ell)^{1+\delta}}=-\infty,
  \]
  see Proposition~2 in~\cite{BD}. Condition~\ref{it:WLD}, and its upper-decoupling counterpart, formulated as Condition~\ref{it:WUD} at the end of Section \ref{sec-cross-entropy}, are satisfied if
  \[
    \lim_{\ell \rightarrow \infty}\frac{\log \alpha_\bY(\ell)}{ (\log \ell)^{1+\delta}}=-\infty
  \]
  for some $\delta>0$. In the $\alpha$-mixing or $\beta$-mixing setting, however, a generalization of another result of Kontoyiannis~\cite[\S{4.4}]{Ko2} discussed in Remark \ref{rem-ka} yields a considerably stronger result.
\end{remark}

\section{Cross-entropy regularity and Kingman's theorem}
\label{sec-cross-entropy}

In Corollary \ref{cor-main-3},  the cross-entropy regularity of the pair $(\bX, \bY)$ is an independent assumption.
Kingman's subadditive ergodic theorem and its various refinements provide a technically and conceptually natural route
for verification of this regularity with further links to the notion of decoupling.
A first hint of this is Derriennic's observation that, when $\P_\bX = \P_\bY$, an almost subadditive ergodic theorem can be applied to the sequence~$(f_n)_{n\in\nn}$ of functions defined by~$f_n(x) = -\log \P_\bY^{(n)}(x_1^n)$ to deduce the Shannon--McMillan--Breiman theorem~\cite[\S{4}]{De}, which here coincides with entropic regularity. This line of thought can be extended to cases where $\P_\bX \neq \P_\bY$ provided that $\P_\bY$ satisfies a more restrictive, gapless\,---\,or \emph{immediate}\,---\,form of lower decoupling:
\begin{description}
	\item[(ILD)\label{it:ILD}] There exists an $o(n)$-sequence $(c_n)_{n\in\nn}$ such that
	\[
		\P_{\bY}^{(n+m)}(ab) \geq \Exp{-c_n}\P_\bY^{(n)}(a)\P_\bY^{(m)}(b)
	\]
	for all~$a \in \cA^n$, $n\in\nn$, $b \in \cA^m$ and $m \in \nn$.
\end{description}

\begin{theorem}
\label{thm:ILD-imp-CER}
	Let $\bX$ and $\bY$ be two independent stationary processes with values in the same finite alphabet and suppose that $\P_\bX^{(n)} \ll \P_\bY^{(n)}$ for all $n\in\nn$. If $\P_\bY$ satisfies \textnormal{\ref{it:ILD}}, then the pair $(\bX,\bY)$ is cross-entropy regular.
\end{theorem}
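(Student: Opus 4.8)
The plan is to apply an almost-subadditive ergodic theorem, in the spirit of Derriennic's derivation of the Shannon--McMillan--Breiman theorem~\cite[\S4]{De}, to the $[0,\infty]$-valued functions
\[
	f_n(x) := -\log \P_\bY^{(n)}(x_1^n).
\]
First I would record the preliminaries: by $\P_\bX^{(n)} \ll \P_\bY^{(n)}$ the cylinder probability $\P_\bY^{(n)}(x_1^n)$ is positive for $\P_\bX$-almost every~$x$, so $f_n$ is $\P_\bX$-almost surely finite, and $f_n \ge 0$ since $\P_\bY^{(n)}(x_1^n) \le 1$. Because $\cA$ is finite,
\[
	\int_\Omega f_n \, \d \P_\bX = -\sum_{a \in \cA^n} \P_\bX^{(n)}(a) \log \P_\bY^{(n)}(a)
\]
is a finite sum of finite terms, hence $f_n \in L^1(\P_\bX)$; this integrability is the only essential use of the finiteness of the alphabet, and it is precisely what may fail for a countably infinite~$\cA$.

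Next I would extract almost-subadditivity from~\textnormal{\ref{it:ILD}}. After replacing $c_n$ by $\max_{1 \le k \le n} c_k$ --- which preserves both~\textnormal{\ref{it:ILD}} and the property $c_n = o(n)$ --- I may assume that $(c_n)_{n \in \nn}$ is nondecreasing. Applying the inequality in~\textnormal{\ref{it:ILD}} with $a = x_1^n$ and $b = x_{n+1}^{n+m}$ and taking logarithms yields, for all $n, m \in \nn$,
\[
	f_{n+m}(x) \le f_n(x) + f_m(\varphi^n x) + c_n
\]
for $\P_\bX$-almost every~$x$, the exceptional null set being taken independent of $n$ and $m$ by a countable union. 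Thus $(f_n)_{n \in \nn}$ is a nonnegative, $L^1(\P_\bX)$, almost-subadditive sequence over the measure-preserving system $(\Omega, \cF, \P_\bX, \varphi)$, with deterministic additive error $c_n = o(n)$.

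Then I would invoke Derriennic's almost-subadditive ergodic theorem~\cite{De} (or a suitable refinement of Kingman's theorem): under these hypotheses $\tfrac1n f_n$ converges, both $\P_\bX$-almost surely and in $L^1(\P_\bX)$, to some $h \in L^1(\P_\bX)$ that is $\P_\bX$-almost surely $\varphi$-invariant. The $L^1$ convergence forces $\lim_{n\to\infty} \tfrac1n \int_\Omega f_n \, \d \P_\bX = \int_\Omega h \, \d \P_\bX < \infty$, so by~\eqref{crent-1} the limit defining $H^{\textnormal{cross}}[\bX,\bY]$ exists and $H^{\textnormal{cross}}[\bX,\bY] = \int_\Omega h \, \d \P_\bX \in [0,\infty)$. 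Since $h \ge 0$ is integrable, it is $\P_\bX$-almost surely finite, so $h(x) \in [0,\infty]$ for $\P_\bX$-almost every~$x$, and $h = h_{[\bX,\bY]}$ $\P_\bX$-almost surely by definition of $h_{[\bX,\bY]}$ as the pointwise limit of $\tfrac1n f_n$. The three requirements of cross-entropy regularity --- existence of $h_{[\bX,\bY]}$ in $[0,\infty]$, its $\P_\bX$-almost sure shift invariance, and $\int_\Omega h_{[\bX,\bY]} \, \d \P_\bX = H^{\textnormal{cross}}[\bX,\bY]$ --- are thereby all verified.

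The one genuinely delicate point is selecting, and matching the hypotheses of, the correct form of the subadditive ergodic theorem: it must tolerate a nonnegative additive error that is only $o(n)$ and depends solely on the length of the left block, rather than a summability condition such as $\sum_n c_n / n^2 < \infty$ that some formulations impose. The hypotheses such a version actually requires --- $f_n \ge 0$, $\int_\Omega f_1 \, \d \P_\bX < \infty$, and the $o(n)$ bound on the error --- have all been arranged in the previous two paragraphs, and the bookkeeping needed so that the almost-subadditivity relation and the positivity of the relevant $\P_\bY$-cylinders hold simultaneously for all $n, m$ is a routine countable-union argument.
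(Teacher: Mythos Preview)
Your proposal is correct and follows essentially the same route as the paper: define $f_n(x) = -\log \P_\bY^{(n)}(x_1^n)$, use~\ref{it:ILD} to get the almost-subadditive inequality $f_{n+m} \le f_n + c_n + f_m\circ\varphi^n$ $\P_\bX$-almost surely, observe $f_n \in L^1(\P_\bX)$ by finiteness of~$\cA$, and invoke an almost-subadditive ergodic theorem. The only difference is that the paper resolves the ``delicate point'' you flag by citing Sch\"urger's version~\cite[\S{2}, Theorem~1]{Sc}, whose hypotheses accommodate exactly the deterministic $o(n)$ error term, rather than Derriennic~\cite{De}.
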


\begin{proof}
  Note that if $\P_\bX^{(n)}(x_1^{n+m}) > 0$, then $\P_\bY^{(n)}(x_1^{n+m}) > 0$ as well by absolute continuity. Hence, by~\textnormal{\ref{it:ILD}}, the sequence~$(f_n)_{n\in\nn}$ of nonnegative functions defined by~$f_n(x) = -\log \P_\bY^{(n)}(x_1^n)$  satisfies
	\[
		f_{n+m} \leq f_n + c_n + f_m \circ \varphi^n
	\]
	$\P_\bX$-almost surely. Note that $f_n \in L^1(\d\P_\bX)$ for all~$n\in\nn$ by the absolute-continuity assumption and finiteness of the alphabet. Integrating and applying Fekete's lemma, the limit
  \[
    H^\textnormal{cross}[\bX,\bY] := \lim_{n\to\infty} \frac 1n \int f_n \d\P_\bX
  \]
  exists in~$[0,\infty)$. Then, cross-entropy regularity is a consequence of Parts~(i) and~(ii) of the almost subadditive ergodic theorem stated as Theorem~1 in~\cite[\S{2}]{Sc}.
\end{proof}

One can also obtain cross-entropy regularity as the consequence of a suitable \emph{upper-decoupling} condition on~$\bY$ that does allow for gaps, but which is\,---\,unlike all other conditions so far\,---\,formulated with some explicit reference to the distribution of~$\bX$.
\begin{description}
  \item[(XUD)\label{it:XUDp}] There exists an $o(n)$-sequence $(\sigma_n)_{n\in\nn}$ of nonnegative integers such that the nonnegative measurable functions
	\[
		\rho_{n}(x) := \log_+ \sup\left\{ \frac{\P_{\bY}^{(n+\sigma_n+m)}(x_1^n\xi  b)}{\P_{\bY}^{(n)}(x_1^n)\P_{\bY}^{(m)}(b)} : b \in \cA^m, m\in\nn, \xi \in \cA^{\sigma_n} \right\}
	\]
	are in $L^1(\d\P_{\bX})$ and satisfy
	\begin{equation}
	\label{eq:L1-rho-cond-sp}
		\lim_{n\to\infty} \frac{\rho_n}{n} = 0
	\end{equation}
	$\P_{\bX}$-almost surely and in $L^1(\d\P_{\bX})$.
\end{description}

\begin{theorem}
	Let $\bX$ and $\bY$ be two independent stationary processes taking values in the same countable alphabet, such that $\P_\bX^{(n)} \ll \P_\bY^{(n)}$ for all~$n\in\nn$. If~$\P_\bY$ satisfies~\textnormal{\ref{it:XUDp}}, then the pair~$(\bX,\bY)$ is cross-entropy regular.
\end{theorem}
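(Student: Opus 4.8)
The plan is to transfer Condition~\textnormal{\ref{it:XUDp}} into an approximate \emph{super}additivity statement for the sequence of functions $f_n(x) := -\log \P_\bY^{(n)}(x_1^n)$ and then feed it into a generalization of Kingman's subadditive ergodic theorem, in close analogy with the proof of Theorem~\ref{thm:ILD-imp-CER} but in the superadditive direction and with a gap. Since $\P_\bX^{(n)}\ll\P_\bY^{(n)}$ for all~$n$, the function $f_n$ is measurable and, $\P_\bX$-almost surely, a well-defined nonnegative real number (finite because $\P_\bX^{(n)}(x_1^n)>0$ forces $\P_\bY^{(n)}(x_1^n)>0$ for $\P_\bX$-a.e.~$x$ and all $n$).

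The first step is the key inequality. Fix $x$ with $\P_\bX^{(n)}(x_1^n)>0$, hence $\P_\bY^{(n)}(x_1^n)>0$, and let $n,m\in\nn$. Decomposing $x_1^{n+\sigma_n+m}=x_1^n\,\xi\,b$ with $\xi=x_{n+1}^{n+\sigma_n}\in\cA^{\sigma_n}$ and $b=(\varphi^{n+\sigma_n}x)_1^m\in\cA^m$, the definition of $\rho_n$ (and the trivial bound $\log_+$ of a sup is $\geq$ its logarithm) gives
\[
	\P_\bY^{(n+\sigma_n+m)}(x_1^{n+\sigma_n+m})\le \Exp{\rho_n(x)}\,\P_\bY^{(n)}(x_1^n)\,\P_\bY^{(m)}\big((\varphi^{n+\sigma_n}x)_1^m\big),
\]
which upon taking $-\log$ becomes the gapped, error-perturbed superadditivity
\[
	f_{n+\sigma_n+m}(x)\ge f_n(x)+f_m(\varphi^{n+\sigma_n}x)-\rho_n(x),\qquad\P_\bX\text{-a.s.}
\]
By hypothesis $\sigma_n=o(n)$, and $\rho_n/n\to0$ both $\P_\bX$-almost surely and in $L^1(\d\P_\bX)$; the latter yields $\int\rho_n\,\d\P_\bX=o(n)$ after integrating against the $\varphi$-invariant measure $\P_\bX$.

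Next I would feed this relation into a suitable extension of Kingman's ergodic theorem---the superadditive counterpart of the statement invoked for Theorem~\ref{thm:ILD-imp-CER}, now accommodating the sublinear gap $\sigma_n$ and the nonnegative sublinear error~$\rho_n$; this is precisely the type of ergodic theorem available for decoupled sequences (see \cite{CJPS} and the almost-subadditive result of \cite[\S2]{Sc}) and discussed further in the companion paper~\cite{CDEJR1}. It produces a $\varphi$-invariant $h\colon\Omega\to[0,\infty]$ with $\tfrac1n f_n\to h$ $\P_\bX$-almost surely and $\tfrac1n\int f_n\,\d\P_\bX\to\int h\,\d\P_\bX$. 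Since $\tfrac1n\int f_n\,\d\P_\bX$ is exactly the sequence appearing in~\eqref{crent-1}, this shows that $H^{\textnormal{cross}}[\bX,\bY]$ exists and equals $\int h\,\d\P_\bX$; as $h=h_{[\bX,\bY]}$ $\P_\bX$-almost surely and $h$ is $\varphi$-invariant, the pair~$(\bX,\bY)$ is cross-entropy regular. The case $\sup_n\tfrac1n\int f_n\,\d\P_\bX=\infty$, i.e.\ $H^{\textnormal{cross}}[\bX,\bY]=\infty$, needs no separate treatment: because $f_n\ge0$, iterating the superadditivity and comparing with Birkhoff averages of truncations of $f_1$ already forces $\tfrac1n f_n\to\infty$ $\P_\bX$-almost surely in that case.

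The step I expect to be the main obstacle is the ergodic-theoretic input itself: making rigorous the passage from the displayed gapped, error-perturbed superadditivity---where the two indices are coupled through the $n$-dependent shift $n+\sigma_n$, so that at a given total length $N$ the second summand involves $f_{N-n-\sigma_n}$ rather than $f_{N-n}$---to a form accepted by a clean almost-superadditive ergodic theorem. Morally the gap is negligible since $\sigma_n=o(n)$ and $f_n$ is only mildly sensitive (by marginalization) to a sublinear shift of the observation window, and the error is negligible since $\rho_n=o(n)$ in both required senses; but the bookkeeping, for an \emph{arbitrary} stationary (not necessarily ergodic) $\bX$ and a possibly infinite limit, is exactly what the decoupling-adapted versions of Kingman's theorem in~\cite{CJPS, CDEJR1} are built to handle, and I would quote them rather than reprove them.
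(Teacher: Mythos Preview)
Your approach is essentially the paper's, modulo a sign convention (the paper sets $f_n = \log \P_\bY^{(n)}(x_1^n)$ and phrases the key inequality as gapped \emph{sub}additivity) and the citation for the gapped Kingman theorem: the paper invokes a dedicated extension of Steele~\cite{St} and Sch\"urger~\cite{Sc} presented in~\cite{Ra22}, rather than~\cite{CJPS} or~\cite{CDEJR1}. The one ingredient you gloss over is the elementary monotonicity $f_{n+1} \leq \max\{f_n\circ\varphi, f_n\}$ (from marginalizing one coordinate), which the paper records explicitly as a hypothesis of the theorem in~\cite{Ra22}; your remark that $f_n$ is ``mildly sensitive to a sublinear shift of the observation window'' points at this, but it is worth stating precisely.
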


\begin{proof}
	If $\P_\bX^{(n+\sigma_n+m)}(x_1^{n+\sigma_m+m}) > 0$, then $\P_\bY(x_1^{n+\sigma_n+m}) > 0$ as well by absolute continuity, and
	\begin{align*}
		0 < \P_\bY^{(n+\sigma_n+m)}(x_1^{n+\sigma_m+m})
			&= \P_\bY^{(n)}(x_1^{n})\cdot \P_\bY^{(m)}(x_{n+\sigma_n+1}^{n+\sigma_n+m}) \cdot \frac{\P_\bY^{(n+\sigma_n+m)}(x_1^{n+\sigma_n+m})}{\P_\bY^{(n)}(x_1^{n})\P_\bY^{(m)}(x_{n+\sigma_n+1}^{n+\sigma_n+m})}.
	\end{align*}
	Hence, since
	\[
		\frac{\P_\bY^{(n+\sigma_n+m)}(x_1^{n+\sigma_n+m})}{\P_\bY^{(n)}(x_1^{n})\P_\bY^{(m)}(x_{n+\sigma_n+1}^{n+\sigma_n+m})} \leq \exp\left(\rho_n (x)\right),
	\]
  the sequence~$(f_n)_{n\in\nn}$ of functions defined by~$f_n(x) = \log \P_\bY^{(n)}(x_1^n)$ satisfies
  \begin{equation}
  	\label{eq:isu}
  	\begin{split}
  		f_{n+\sigma_n+m} &\leq f_n + \rho_n + f_m \circ \varphi^{n+\sigma_n}
  	\end{split}
  \end{equation}
  $\P_\bX$-almost surely. The gapped subadditivity condition~\eqref{eq:isu}, together with~\eqref{eq:L1-rho-cond-sp}, is precisely the setup of the  extension of works of Steele~\cite[\S{2}]{St} and Sch\"urger~\cite[\S{2}]{Sc}
  presented in \cite{Ra22}\,---\,note that there is no positive part of~$f_n$ to be controlled in~$L^1(\d\P_\bX)$ and that $f_{n+1} \leq \max\{f_n \circ \varphi, f_n\}$.
  Hence, $\tfrac 1n f_n$ converges $\P_\bX$-almost surely and $\tfrac 1n \int f_n \d\P_\bX$ converges to the integral of the almost sure limit. Up to a sign, those are exactly the requirements for cross-entropy regularity.
\end{proof}

As a corollary of this last theorem, the cross-entropy regularity of the pair~$(\bX,\bY)$ holds for all~$\P_\bX$ if~$\P_\bY$ satisfies an \emph{upper-decoupling} property akin to \ref{it:ILD}, but possibly with a gaps:
\begin{description}
	\item[(UD)\label{it:UD}] There exists $o(n)$-sequences $(\sigma_n)_{n\in\nn}$ and $(d_m)_{m\in\nn}$ such that
	\begin{equation}
	\label{eq:old-UD}
		\P_\bY^{(n+\sigma_n+m)}(a\xi b) \leq \Exp{d_n}\P_{\bY}^{(n)}(a)\P_{\bY}^{(m)}(b)
	\end{equation}
	for all $a \in \cA^n$, $n\in\nn$, $\xi \in \cA^{\sigma_m}$, $b\in\cA^m$ and $m\in\nn$.
\end{description}

Note that there is an analogue of Remark~\ref{rem:psi-implies-WLD} for upper decoupling: under $\psi$-mixing, there exists $\ell \in \nn$ such that $\psi_\bY(\ell)<\infty$, and~\textnormal{\ref{it:UD}} then holds with $\sigma_m \equiv \ell$ and $d_m \equiv \log (1+ \psi_\bY(\ell))$. Also note that \ref{it:UD} is \emph{not} the direct analogue of~\ref{it:WLD}, which would be the following.
\begin{description}
	\item[(WUD)\label{it:WUD}]  For every~$K \in \nn$, there exists nondecreasing sequences $(c_n)_{n \in \nn}$ and $(\tau_n)_{n\in\nn}$ of nonnegative integers satisfying   $c_n = o(n)$ and $\log \tau_n = o(n)$ and such that for every~$a \in \cA^n$ and $B \subseteq \cA^m$ the upper-decoupling inequality
		\begin{equation}
		\label{eq:WUDE}
			\P_\bY\left\{y_{1}^{n} = a \text{ and } y_{n+\sigma_n+1}^{n+\sigma_n+m} \in B  \right\}
				\leq \Exp{d_n} \P_\bY\left\{y_{1}^{n} = a\right\} \P_\bY\left\{y_1^m \in B\right\} + \Exp{-Kn}
		\end{equation}
		holds.
\end{description}
This \emph{weak upper-decoupling} condition will appear again in Remark \ref{rem-ka}.

\section{Examples}
\label{sec-examples}

In this section, we discuss various examples for which the conditions of Corollary~\ref{cor-main-3} are met.

\begin{example}\label{ex-1}
	Let $\bY$ be a stationary Markov chain taking values in a countable alphabet ${\cal A}$. Its probability distribution~$\P_\bY$ is uniquely specified by a right-stochastic matrix $P=[P_{a,b}]_{a, b\in {\cal A}}$ of transition probabilities and an invariant probability vector $\pi=(\pi_a)_{a\in {\cal A}}$ for $P$: for $n\in\nn$ and $a \in {\cal A}^n$, we have
	\begin{equation}
	\label{eq:Markov-marginals}
		\P_\bY^{(n)}(a)=\pi_{a_1}P_{a_1,a_2}\cdots P_{a_{n-1},a_n}.
	\end{equation}
	We will always assume that all entries of $\pi$ are strictly positive\,---\,for our purposes, this represents no loss of generality.

	We first discuss the case where~$\cA$ is \emph{finite}. It then follows easily from~\eqref{eq:Markov-marginals} that
	Condition~\ref{it:UD} holds with $\sigma_m \equiv 0$ and $d_m \equiv -\min_{a\in {\cal A}}\log \pi(a)$. Moreover, the process~$\bY$ is $\psi$-mixing if and only if the chain~$\bY$ is irreducible and aperiodic, and in this case Kontoyiannis' results described in Section~\ref{sec-cont} apply.
	If the chain is only irreducible, then condition \ref{it:WLD} holds uniformly in $K$, with $(c_n)_{n\in \nn}$ and $(\tau_n)_{n\in \nn}$ bounded sequences.
	Hence, Corollary~\ref{cor-main-3} holds for all finite-state stationary processes~$\bX$ and  irreducible, stationary, finite-state Markov chains~$\bY$, generalizing the corresponding part of Theorem~1.1 in~\cite{Sh1}. In Figure~\ref{fig:ep-markov}, we illustrate this result by numerical estimations of the cross entropy of  $4$-state irreducible, periodic Markov chains.

  If the alphabet ${\cal A}$ is countably infinite, the chain $\bY$ is irreducible and aperiodic if and only if it is $\alpha$-mixing, and it  is $\alpha$-mixing if and only if it is $\beta$-mixing~\cite[\S{3}]{Br1}. If, in addition, $\bY$ is $\phi$-mixing, then it is exponentially $\phi$-mixing~\cite[\S{3}]{Br1}, and Condition~\ref{it:WLD} holds by Remark~\ref{rem-ctou}.
  As pointed out in this remark, in the $\alpha$-mixing or $\beta$-mixing case, the use of~\ref{it:WLD} can be sometimes  bypassed in view of a generalization of another result of Kontoyiannis discussed in Remark~\ref{rem-ka}. Aperiodic and periodic examples where the full strength of~\ref{it:WLD} is needed are discussed in Example 2. Regarding cross-entropy regularity, Condition~\ref{it:XUDp} with $\sigma_n \equiv 0$ holds whenever the probability measures $(\P_\bX([a])_{a\in\cA}$ and
  $(\pi(a))_{a\in \cA}$ on~${\cal A}$ have finite cross entropy. To see this, note that
  $
    \rho_n(x)
      \leq -\log {\pi(x_n)}
  $
  in view of the Markov property and the fact that $\pi(b_1) \geq \pi(x_n)P_{x_n,b_1}$ by invariance of~$\pi$.
\end{example}

\begin{figure}
	\begin{center}
		\def\svgwidth{\columnwidth}
    \resizebox{.95\textwidth}{!}
    {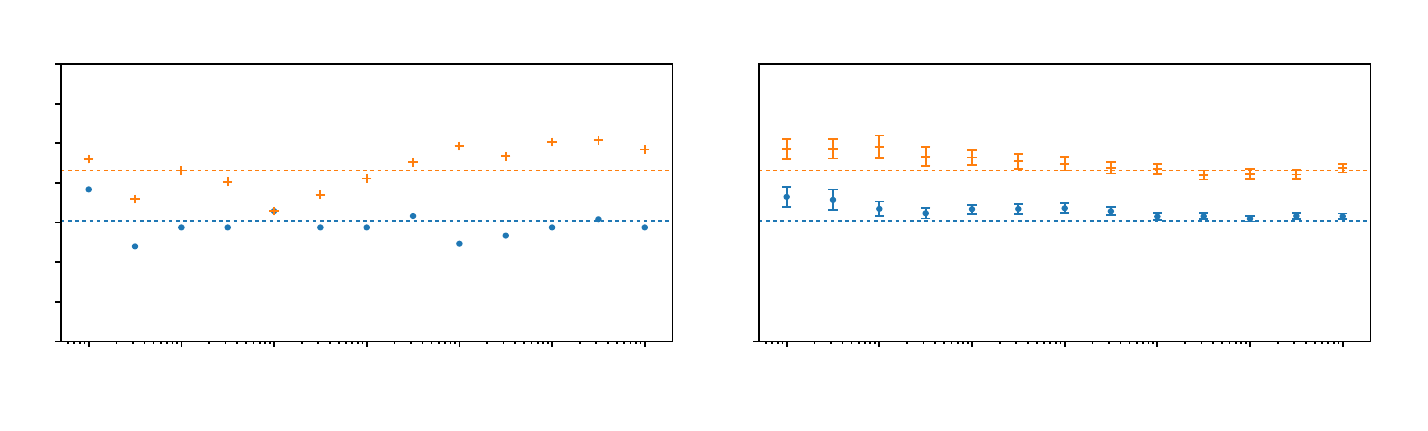}
	\end{center}
	\caption{\small As discussed in Example~\ref{ex-1}, our results\,---\,and in particular Corollary~\ref{cor-main-3}\,---\,apply when~$y$ is a sample from a stationary, irreducible, periodic Markov chain~$\bY$ with a state space with 4 letters. On the left, we plot~$\log m / L_m(x,y)$ as a function of~$m$ in two cases: for the blue circles,~$x$ is an independent second sample from the same chain; for the orange crosses,~$x$ is a sample from a different Markov measure with the same adjacency matrix. On the right, we plot the mean behaviour of the same quantities, together with the standard error of the mean, as we repeat the experiment plotted on the left 32 times. On both plots, horizontal dashed lines represent the theoretical values of cross entropy computed using the transition probabilities.}
	\label{fig:ep-markov}
\end{figure}

\begin{example} \label{ex-2}
	Consider the Markov chain~$\bY$ on~$\zz$ with transition probabilities
	\[
		P_{i,j}
		=
		\begin{cases}
			1 & \text{ if } i \leq 0, j = i+1 \\
			\gamma & \text{ if } i > 0, j = i+1 \\
			(1-\gamma) & \text{ if } i > 0, j = -h(i) \\
			0 & \text{ otherwise}
		\end{cases}
	\]
	where  $\gamma \in (0,\tfrac 12)$ and~$h : \nn \to \nn$ is an increasing function satisfying
  \begin{equation}
  \label{eq:h-growth}
    \lim_{n\rightarrow\infty}\frac{\log h(n)}{n}=0;
  \end{equation}
  see Figure~\ref{fig:markov-example}.
  We use the notation
	\[
		h^{-1}(j) := \min\left\{ i : h(i) \geq |j| \right\}.
	\]
	One easily shows that this chain is irreducible and positive recurrent\,---\,\emph{e.g.}\ by using Foster's theorem with Lyapunov function $j \mapsto \gamma^{-\frac 1{2} j}\one_{\nn}(j) - j (1 - \one_{\nn}(j)) $, where $\one_\nn$ is the characteristic function of the set $\nn$.  Its unique invariant probability vector
	satisfies
	\begin{equation}
  \label{eq:form-pi-j}
    \pi(j)
    =
    \begin{cases}
       \gamma^{j + O(1)} & \text{ if } j \geq 0 \\
       \gamma^{h^{-1}(j) + O(1)} & \text{ if } j < 0. \\
    \end{cases}
  \end{equation}
  \begin{figure}
    \centering
      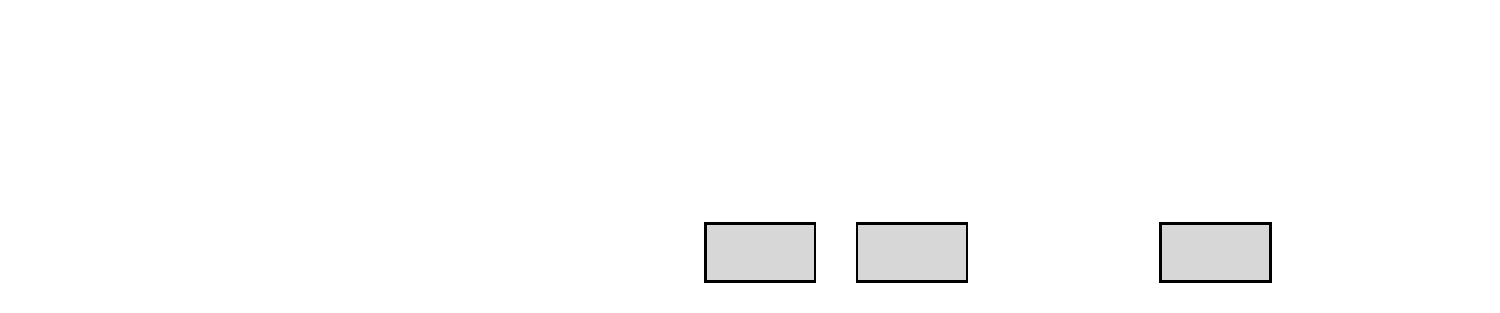
    \caption{A sketch of the Markov chain on~$\zz$ discussed in Example~\ref{ex-2}. In Example~\ref{ex-4}, the white nodes (nonpostive integers) are mapped to 0 and the gray nodes (natural numbes) are mapped to 1.}
    \label{fig:markov-example}
    \end{figure}
	The period of this chain is the greatest common divisor of the set $\{ h(n) + n +1\,:\, n\geq 0\}$ and
	could be any natural number, depending on the choice of~$h$. For example, if $h(n)=n$, the chain is aperiodic, and if $h(n)=r(n+1)$ for some $r \in \nn$, then the chain has period~$r+1$.

  For any  choice of~$h$, Condition~\ref{it:WLD} cannot hold  with $c_n = o(n)$ and $\tau_n = o(n)$. In particular, Condition~\ref{it:ILD} necessarily fails. To see this, take $K$ large, $a=(1, 2, \cdots, n)\in {\cal A}^n$ and $B=\{(2n+1)\} \subset \cA^1$, and note that  $\P_\bY([a]) = \gamma^{n + O(1)}$, $\P_\bY(B) = \gamma^{2n + O(1)}$,
  but
  \[
    \P_\bY\left\{y_{1}^{n} = a \text{ and } y_{n+\ell+1} \in B  \right\}=0
  \]
  if $\ell < n$.

  On the other hand, thanks to~\eqref{eq:h-growth}, Condition~\ref{it:WLD} does hold with $c_n = O(\log h(n))$ and $\tau_n = O(h(n))$. We sketch the proof, which is based on the following basic feature of the model: for every $a\in {\cal A}^n$, $\xi\in {\cal A}^{\ell}$, and  $b\in {\cal A}^m$,
  \begin{equation}  \P_{\bY}([a\xi b])\geq  \gamma^{\#\left\{k : \xi_k \in \nn \right\}  + O(1)} \frac{\P_{\mathbf{Y}}([a])\P_{\mathbf{Y}}([b])}{\pi(b_1)}
  \label{t-pel}
  \end{equation}
  whenever the left-hand side is nonzero. We do not keep explicit track of the dependence on~$K$.

  A simple analysis based on~\eqref{t-pel} yields that there exists $c_n = O(1)$ such that, for every $a \in \cA^n$ and $B = \{b\} \subset \cA^m$ with both~$a_n$ and~$b_1$ in an interval of the form $[\![-O (h(n)), O(n)]\!]$, there exists $\ell(a,\{b\}) = O(h(n))$, only depending on~$a_n$ and~$b_1$, such that the lower-decoupling inequality~\eqref{eq:LDE} holds.
  In view of~\eqref{eq:form-pi-j}, the interval of the form $[\![-O (h(n)), O(n)]\!]$ can be chosen depending on~$K$ so that the following property holds: if~$a_n$ or~$b_1$ lies outside this interval, then $\P_{\mathbf{Y}}([a])\P_{\mathbf{Y}}(\{b\}) \leq \Exp{-Kn}$ and the bound~\eqref{eq:LDE} trivially holds.
  Now, to pass from a singleton~$\{b\}$ to a general $B \subseteq \cA^m$, we first note that since we need only consider~$B$ such that $\P_{\mathbf{Y}}(B) \geq \Exp{-Kn}$ for the same reason as above, we may assume that at least half of the mass of~$B$ must be given by words~$b$ with~$b_1$ in the above interval of the form $[\![-O (h(n)), O(n)]\!]$. Then, among those, there must exist~$b_1^*$ such that
  \[
  	\P_\bY(B\cap[b_1^*]) \geq \frac{1}{O(h(n))} \P_\bY(B),
  \]
  and we can argue as in the case of a singleton at the only cost of increasing~$c_n$ by $O(\log h(n))$.

  Finally, following on the last comment in Example~\ref{ex-1}, the cross-entropy regularity with respect to~$\bX$ follows, via~\ref{it:XUDp},  if
  \begin{equation}
  \label{eq:XUD-M-ex}
    \sum_{j \in \nn} j \, \P_{\mathbf{X}}([j]) + \sum_{j \in \nn} h^{-1}(j) \, \P_{\mathbf{X}}([-j]) < \infty.
  \end{equation}
\end{example}

\begin{example}
	Let ${\bf Z}$ be a stationary Markov chain taking values
	in a countable alphabet~${\cal Z}$. We denote again by $\pi=(\pi_z)_{z\in {\cal Z}}$ and $P=[P_{z,z^\prime}]_{z, z^\prime \in {\cal Z}}$  its invariant probability vector and matrix of transition probabilities.  Let ${\cal A}$ be another countable alphabet and $F: {\cal Z}\rightarrow {\cal A}$ a surjective function. The random process
	$\bY=(Y_n)_{n\in \nn}$ defined by $Y_n=F(Z_n)$ is called a stationary {\em  function-Markov chain}, or a  hidden Markov process. Its probability distribution $\P_\bY$
	has a matrix-product representation that is useful for many purposes; see Proposition~2.25 in~\cite{BCJP}.\footnote{This proposition is stated in terms of finite alphabets,  but the statement and proofs extend verbatim to countable alphabets.} For $a\in {\cal A}$, we define matrices $M_a=[m_{z,z^\prime}(a)]_{z, z^\prime \in {\cal Z}}$, where $m_{z,z^\prime}(a)=p_{z,z^\prime}\delta_{F(z^\prime)}(a)$ and $\delta$ is the Kronecker delta on ${\cal A}$. The matrices $\left\{M_a\right\}_{a\in {\cal A}}$ have non-negative entries,
	$\sum_{a\in {\cal A}} M_a= P$, and, for any $a=(a_1, \cdots, a_n)\in {\cal A}^n$,
	\begin{equation}
	\label{mon-er}
		 \P_\bY^{(n)}(a)=\pi M_{a_1}\cdots M_{a_n}{\bf 1},
	\end{equation}
	where ${\bf 1}$ is the column vector whose components are all equal to~$1$.

	We again discuss first the case where the alphabet ${\cal Z}$ is finite. It follows easily from~\eqref{mon-er}  that~\ref{it:UD} holds with $\sigma_m \equiv 0$ and $d_m \equiv -2\min_{a\in {\cal A}}\log \pi(a)$. If the chain~${\bf Z}$ is irreducible and aperiodic, then~$\bY$ is $\psi$-mixing, and Kontoyiannis' results apply.
	If ${\bf Z}$ is irreducible but not aperiodic, $\bY$ may even fail to be mixing in the sense of measure theory. However, Condition~\ref{it:WLD} holds uniformly in~$K$, with $(c_n)_{n\in \nn}$ and $(\tau_n)_{n\in \nn}$ a bounded sequences,
	and Corollary~\ref{cor-main-3} applies;~see Example~2.25 in~\cite{CJPS}. Thus, our  results fully generalize Theorem~1.1 in~\cite{Sh1}.

  The study of hidden Markov chains with a finite alphabet~${\cal Z}$ is a very active research area; see \emph{e.g.}~\cite{MPW}. The case of a countably infinite alphabet ${\cal Z}$ is much less understood, even if ${\cal A}$ is kept finite. We restrict ourselves here to the discussion of one concrete example.
\end{example}

\begin{example}
\label{ex-4}
  Let~$\bY$  be the  stationary function-Markov chain  with alphabet~$\cA = \left\{0,1\right\}$ obtained by applying the function
	\begin{align*}
		F : \zz &\to \left\{0,1\right\} \\
					j &\mapsto
						\begin{cases}
							1 & \text{ if } j > 0, \\
							0 & \text{ if } j \leq 0.
						\end{cases}
	\end{align*}
	to the Markov chain in Example~\ref{ex-2}; see Figure~\ref{fig:markov-example}.
	With\footnote{We use the shorthand~$1^n$ for the word $(1,1,\dotsc,1) \in \cA^n$, the convention that $ab$ denotes the concatenation $(a_1, a_2, \dotsc, a_n, b_1, b_2, \dotsc, b_m)$ of the two words $a \in \cA^n$ and $b\in \cA^m$, and so on.}
  $a = 1^{n}$ and $B = \left\{01\right\}$, we have $\P_{\mathbf{Y}}([a])\P_{\mathbf{Y}}(B) = \gamma^{n + O(1)}$
  but $\P_{\mathbf{Y}}([a] \cap \varphi^{-n-\ell}B) = 0$ unless $\ell \geq h(n)$.\
  We thus see that, for $K$ large enough, any sequence~$(\tau_n)_{n\in\nn}$ for which~\eqref{eq:LDE} holds must satisfy $\tau_n \geq h(n)$. In particular,~\eqref{eq:h-growth} is a necessary condition for~\ref{it:WLD} to hold.

  We proceed to show that~\eqref{eq:h-growth} is also a sufficient condition for~\ref{it:WLD} to hold. The proof is based on the following three observations:
  \begin{itemize}
    \item Because $(j,j') = (0,1)$ is the only pair in~$\zz^2$ with positive probability in the underlying chain such that $(F(j),F(j')) = 01$, we have the identity
    \[
      \P_{\bY}([a \xi_- 01 \xi_+ b]) = \P_{\bY}([a \xi_- 01]) \P_{\bY}([1 \xi_+ b] | j_{1} = 1)
    \]
    for all (possibly empty) words ${a}$, $\xi_-$, $\xi_+$ and~${b}$, where the conditional probability refers to the path $j = (j_1, j_2, \dotsc)$ of the underlying chain on~$\zz$.
    \item Because $(F(j_1), F(j_2), \dotsc, F(j_{n_1}), F(j_{n_1+1})) = 1^{n_1} 0$ almost surely implies $j_{n_1} \geq n_1$ and $j_{n_1+1} \leq -h(j_{n_1})$,
    we have the inequality $\P_\bY([1^{n_1} 0^{n_0}]) \leq \P_\bY([1^{n_1} 0^{h(n_1)+1}])$. Pursuing this reasoning gives the more general bound
    \[
  		\P_\bY([\tilde{a} 1^{n_1} 0^{n_0}])
  			\leq
  			\P_\bY([\tilde{a} 1^{n_1} 0^{h(n_1)+1}]) \leq
  			O(1)\P_\bY([\tilde{a} 1^{n_1} 0^{h(n_1)+1} 1])
  	\]
  	whenever $\tilde{a}$ is empty or starts with $0$.
    We have used~\eqref{eq:form-pi-j} and a geometric summation over all possible values of~$j_1$ in the underlying chain for the second inequality in the case where~$\tilde{a}$ is empty.
  	\item Because $(F(j_1), F(j_2), \dotsc, F(j_{m_0+1})) = 10^{m_0}$ almost surely {implies} that $j_1 \geq h^{-1}(m_0)$, we have the bound
    $
      \P_\bY([1 0^{m_0}])
      = \P_\bY([1^{h^{-1}(m_0)} 0^{m_0}])
      \leq O(1) \P_\bY([1^{h^{-1}(m_0)} 0^{m_0}] | j_1 = 1).
    $
    More generally,
    \begin{align*}
      \P_\bY([1^{m_1} 0^{m_0} \tilde{b}])
        &\leq O(1) \P_\bY([1^{h^{-1}(m_0)} 0^{m_0} \tilde{b}] | z_1 = 1)
    \end{align*}
  	for all $\tilde{b}$.
  \end{itemize}
  Writing $a$ in the form~$\tilde{a} 1^{n_1} 0^{n_0}$ for some~$\tilde{a}$ that is either empty or ending with~$0$, and $b$ in the form~$1^{m_1} 0^{m_0} \tilde{b}$ for some~$\tilde{b}$ that is either empty or starts with~$1$, it follows from the above observations  that
  \[
    \P_{\bY}([a 0^{[h(n_1) + 1 - n_0]_+} 1^{[h^{-1}(m_0) - m_1]_+} b]) \geq O(1) \P_{\bY}[a] \P_{\bY}[b].
  \]
  Note that when $\P_{\bY}([a])\P_{\bY}([b]) \geq \Exp{-Kn}$, we necessarily have $m_0 = O(h(n))$ and $n_1 = O(n)$ and the inserted word $\xi = 0^{[h(n_1) + 1 - n_0]_+} 1^{[h^{-1}(m_0) - m_1]_+}$ has length $O(h(n))$. Hence,~\eqref{eq:LDE} holds with $\ell(a,B) = O(h(n))$ and $c_n = O(1)$ when~$B$ is a singleton~$\{b\}$.

	Passing from a singleton to a general set $B \subseteq \cA^m$ in~\eqref{eq:LDE}
  can be done at the only cost of increasing~$c_n$. Indeed, every~$b$ in~$B$ can be uniquely written in the form~$1^{m_1^{(b)}} 0^{m_0^{(b)}} \tilde{b}$ for some~$\tilde{b}$ that depends on~$b$ and is  either empty or starts with~$1$.
  Assuming again  that $\P_{\bY}(B) \geq \Exp{-Kn}$, at least a quarter of the mass of~$B$ comes from words~$b$ with $m_1^{(b)} = O(n)$ and  $m_0^{(b)} = O(h(n))$. Among those possible lengths, there exist $m_1^*$ and $m_0^*$ such that
	\[
		\P_\bY \left\{ b \in B : m_1^{(b)} = m_1^*, m_0^{(b)} = m_0^*\right\} \geq \frac{1}{O(h(n))O(n)} \P_\bY(B).
	\]
  Hence, repeating the above argument for those~$b$'s, we conclude that Condition~\ref{it:WLD} holds with $c_n = O(\log h(n))$ and $\tau_n = O(h(n))$, depending on~$K$.

  As for the cross-entropy regularity, it holds by the Shannon--McMillan--Breiman theorem if $\P_\bX = \P_\bY$.  In general, its  validity is a question of independent interest which we shall not pursue here.
\end{example}

We now briefly discuss the class of processes that initially sparked our interest in the question of universal estimators of relative entropies for pairs of measures that are not necessarily mixing.

\begin{example}
	Let ${\cal O}$ be a $C^\ast$-algebra with identity~$\one$. A \textbf{quantum instrument} on~${\cal O}$ is a pair $(\rho, \left\{\Phi_a\right\}_{a\in {\cal A}})$, where~$\rho$ is a state  on~${\cal O}$ and $\Phi_a: {\cal O}\rightarrow {\cal O}$ are completely positive maps such that
	$\Phi:=\sum_{a\in {\cal A}}\Phi_a$ is unital and satisfies $\rho\circ \Phi=\rho$. The unraveling of $(\rho, \left\{\Phi_a\right\}_{a\in {\cal A}})$ is a stationary stochastic process~$\bY$ taking values in ${\cal A}$ and such that,
	 for all $a\in {\cal A}^n$ and $n\in\nn$,
	\[
		\P_\bY^{(n)}(a)= \rho(\Phi_{a_1}\circ \cdots \circ \Phi_{a_n} [{\one}]).
	\]
	We will always assume that the state $\rho$ is faithful.

	Most of the mathematically rigorous literature on the topic deals with the case where the algebra~${\cal O}$ is finite dimensional and the alphabet~$\cA$ is finite; see~\cite[\S{1}]{BJPP} and~\cite[\S{1}]{BCJP} for references and for the quantum-mechanical motivation for the study of this type of processes.  With these extra assumptions, Condition~\ref{it:UD} holds with $\sigma_m \equiv 0$ and $d_m \equiv -2\log \lambda$, where $\lambda >0$ is the smallest eigenvalue of~$\rho$. If~$\Phi$ is primitive,
	then~$\bY$ is $\psi$-mixing, and Kontoyiannis' results apply. If $\Phi$ is irreducible, then mixing may fail but Condition~\ref{it:WLD} holds uniformly in~$K$ with $(c_n)_{n\in \nn}$ a constant sequence and $(\tau_n)_{n\in \nn}$ a bounded sequence; see Proposition~1.1 in~\cite{BCJP} and references therein. Therefore, Corollary~\ref{cor-main-3} applies.
\end{example}

Finally, we briefly discuss a class of  processes that play important role in  classical statistical mechanics and dynamical systems theory.

\begin{example}
	A stationary process $\bY$ taking values in a finite alphabet~${\cal A}$ is called a \emph{weak-Gibbs process} if there exists a continuous function $F: \Omega \to \rr$ and a positive sequence $(C_n)_{n \in \nn}$
  with $\log C_n = o(n)$ and
  such that
	\begin{equation}
	\label{gibbs-con}
		C_n^{-1}\e^{-S_n F(x)}
		\leq \P_\bY^{(n)}(x_1^n)\leq C_n \e^{-S_n F(x)}
	\end{equation}
  holds for all~$x\in \Omega$ and all~$n\in \nn$, where $S_nF(x) := \sum_{j=0}^{n-1} F(\varphi^j(x))$ is the usual ergodic sum. In the same context, the probability distribution $\P_\bY$ is called a \emph{weak-Gibbs measure} with potential $F$. Weak Gibbsianity was introduced in~\cite{Yu} and has been extensively studied since.

  Weak Gibbsianity is conceptually distinct from the decoupling conditions discussed in this paper; see~\cite[App.~A.3]{CJPS} for a discussion of this point. There are, however, several important special cases where notions of Gibbsianity imply some of our decoupling conditions. The first and the most basic such case is where~$\P_\bY$ is Gibbs
  in the sense of Bowen, namely where \eqref{gibbs-con} holds with $C_n = O(1)$.
  It is noted in~\cite[\S{2}]{Wa2}\,---\,which discusses this Bowen--Gibbs condition at length\,---\,that~$\bY$ is then necessarily $\psi$-mixing, and so  Kontoyiannis' result and its corollaries apply.
  The second special case is where~$\P_\bY$ is a so-called \emph{$g$-measure}; see the classical works \cite{Ke,Wa1,PPW} or~\cite{OT,BFV} for references and additional information. Among several equivalent characterizations of $g$-measures, we choose here the following one: for some continuous function $H: \Omega \rightarrow \rr$,
  \begin{equation}
  	\label{eq:g-cond}
  		\lim_{n\to\infty}\log \frac{\P_{\bY}^{(n)}(x_1^n)}{\P_{\bY}^{(n-1)}(x_2^n)} = H(x)
  \end{equation}
  for all $x\in \Omega$; see Theorem~2.1 in~\cite{OT}. The measure~$\P_\bY$ is then automatically weak Gibbs with potential $F=-H$. Moreover, the pointwise convergence \eqref{eq:g-cond} is automatically uniform on~$\Omega$\,---\,see Lemma~3.6 in~\cite{OST} for an elegant proof of this basic fact\,---, and it is then not difficult to show that~$\P_\bY$ satisfies condition \ref{it:SLD}. In  particular, condition~\ref{it:WLD} holds uniformly in $K$.

  In general, the extension of the main result of this  paper to weak-Gibbs processes  remains an open problem. We mention that, in the weak-Gibbs case, cross-entropy regularity
  is guaranteed with respect to any stationary process $\bX$ by the Birkhoff ergodic theorem and that
  \[
    H^{\textnormal{cross}}[\bX,\bY] =\int_\Omega F\d \P_\bX.
  \]
  The literature on notions of Gibbsianity for countably infinite alphabets is scarce; see~\cite{BS}.
\end{example}

\section{Remarks}\label{sec-remarks}

\begin{remark}\label{rem-strongWLD}{\bf Uniform \ref{it:WLD} and Theorem~\ref{thm-Ko-generalization}.}
	Suppose that the \ref{it:WLD} holds uniformly in~$K$ with~\eqref{eq:summable-WLD} replaced with $c_n=o(n^\beta)$ and $\log \tau_n=o(n^\beta)$ for some $0 <\beta<1$.\footnote{
	An equivalent formulation is that for all~$\epsilon > 0$, $\sum_{n \in \nn}(n + \tau_n) \Exp{-\epsilon n^\beta + c_n} < \infty$.}
	Then, Step~1 in the proof of Theorem~\ref{thm-Ko-generalization} is not necessary and working with $t = \Exp{\epsilon n^\beta} \P_\bY^{(n)}(a)^{-1}$ instead of
	$t = \Exp{\epsilon n} \P_\bY^{(n)}(a)^{-1}$, its conclusion can be strengthened to the conclusion that, for all $\epsilon >0$,
	\[
		{\log [W_n(x, y)\P_\bY^{(n)}(x_1^n)}] \leq \epsilon n^\beta
	\]
	eventually, $(\P_\bX\times\P_\bY)$-almost surely.
\end{remark}

\begin{remark}\label{rem-ka}{\bf On another waiting-time result of Kontoyiannis.}
  In addition to Theorem~\ref{thm-Ko-psi},  Kontoyiannis has proven a similar result in~\cite[\S{4.4}]{Ko2} in cases where $\P_{\bY}$ is not necessarily $\psi$-mixing, but $\phi$-mixing with summable coefficients; we again refer to~\cite[\S{1--2}]{Br1} for information on the mixing conditions. The proof of this result follows a different strategy in which at least some form of strong mixing conditions appears to be necessary.  Following on the approach advocated in this paper, we remark that a suitable decoupling assumption, this time Condition~\textnormal{\ref{it:WUD}}, allows one to broaden the range of sufficient mixing conditions under which the arguments of~\cite[\S{4.4}]{Ko2} are applicable.  The result is the following.
	\begin{theorem}
	\label{thm:ub-waiting-ud} Suppose  that either:
		\begin{enumerate}
			\item[i.]  $\bY$ is $\phi$-mixing with summable $\phi$-mixing coefficients,
			\item[ii.]~\textnormal{\ref{it:WUD}}  holds and  $\bY$ is $\beta$-mixing with $\beta$-mixing coefficients satisfying $\beta_\bY(\ell) \leq B \ell^{-1}$ for some constant~$B$,
			\item[iii.]~\textnormal{\ref{it:WUD}} holds and $\bY$  is $\alpha$-mixing with $\alpha$-mixing coefficients satisfying $\alpha_\bY(\ell) \leq A \ell^{-2}$ for some constant~$A$.
		\end{enumerate}
		If $\P_{\bX}^{(n)} \ll \P_{\bY}^{(n)}$ for all~$n \in \nn$, then
		\begin{equation}
		\label{eq:ub-waiting-ud}
			\limsup_{n\to\infty} \frac{\log W_n(x, y)}{n}
				\leq \limsup_{n \to \infty}-\frac{\log \P_\bY^{(n)}(x_1^n)}{n}
		\end{equation}
	  for $(\P_{\bX} \times\P_{\bY})$-almost all~$(x,y)$.
	\end{theorem}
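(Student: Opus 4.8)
The plan is to adapt the argument of Kontoyiannis \cite[\S{4.4}]{Ko2}, proving~\eqref{eq:ub-waiting-ud} through a second-moment estimate on the number of occurrences of the word $a := x_1^n$ in $y$, followed by the Borel--Cantelli lemma and the same reduction used in the proof of Theorem~\ref{thm-Ko-generalization}. Since the right-hand side of~\eqref{eq:ub-waiting-ud} is a.s.\ a shift-invariant function of $x$ with values in $[0,\infty]$ and the conclusion is trivial where it is infinite, it suffices to show that for every $\epsilon > 0$ and $K \in \nn$ one has $\log[W_n(x,y)\P_\bY^{(n)}(x_1^n)] \le \epsilon n$ eventually $(\P_\bX\times\P_\bY)$-a.s.\ on the set of $x$ for which $-\tfrac1n\log\P_\bY^{(n)}(x_1^n) \le K$ for all large $n$. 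Using the independence of $\bX$ and $\bY$ and summing over words, this follows once we establish, for every $a \in \cA^n$ with $p := \P_\bY^{(n)}(a) \ge \Exp{-Kn}$ and all large $n$, a bound
\[
  (\P_\bX\times\P_\bY)\bigl\{x_1^n = a \text{ and } W_n(x,y)\,\P_\bY^{(n)}(x_1^n) > \Exp{\epsilon n}\bigr\} \le \P_\bX^{(n)}(a)\, r_n, \qquad \textstyle\sum_n r_n < \infty.
\]
Setting $M := \lceil \Exp{\epsilon n} p^{-1}\rceil$ and $N_M := \#\{1 \le k \le M : y_k^{k+n-1} = a\}$, independence gives that the left-hand side equals $\P_\bX^{(n)}(a)\,\P_\bY\{N_M = 0\}$, while stationarity gives $\E_\bY[N_M] = Mp \asymp \Exp{\epsilon n}$; by Chebyshev's inequality it then suffices to take $r_n := \Exp{-2\epsilon n}\,\Va_\bY(N_M)$ and to verify its summability.

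Expanding $\Va_\bY(N_M)$ as a double sum of covariances and organizing the pairs $(k,k')$ by the gap $g := |k-k'| - n$ between the two occurrences, the overlapping and nearly-adjacent pairs with $g < \sigma_n$ (here $(\sigma_n)$ is the gap sequence furnished by \ref{it:WUD}) are bounded trivially by $p$ each, contributing $O\bigl((n+\sigma_n)\,Mp\bigr) = \Exp{o(n)+\epsilon n}$. For $g \ge \sigma_n$, each covariance is at most the minimum of two estimates. On one hand, applying \ref{it:WUD} with $A = \{y_1^n = a\}$ and $B = \{w \in \cA^{g+n} : w_{g+1}^{g+n} = a\}$ (so that $\P_\bY\{y_1^{g+n} \in B\} = p$ by stationarity) bounds the covariance by $(\Exp{d_n}-1)p^2 + \Exp{-K_\ast n}$, where $K_\ast$ is the error exponent in \ref{it:WUD}, which we are free to choose. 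On the other hand, the mixing hypothesis bounds the covariance by $\phi_\bY(g)\,p$ in case~(i), by $\tfrac12\beta_\bY(g)$ in case~(ii), and by $\tfrac12\alpha_\bY(g)$ in case~(iii). In case~(i) the $\phi$-mixing bound alone already suffices: summing $\phi_\bY(g)\,p$ over $g$ gives $O\bigl(p\sum_g \phi_\bY(g)\bigr) = O(p)$ per column, so $\Va_\bY(N_M) = O\bigl((n + \textstyle\sum_g\phi_\bY(g))\,Mp\bigr)$ and $r_n = O\bigl((n+\sum_g\phi_\bY(g))\,\Exp{-\epsilon n}\bigr)$ is summable --- here neither the restriction $p \ge \Exp{-Kn}$ nor condition \ref{it:WUD} is needed.

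In case~(iii), one balances the two bounds at the crossover gap $G \asymp \bigl(p\,\Exp{d_n/2}\bigr)^{-1}$: using \ref{it:WUD} for $\sigma_n \le g \le G$ and the $\ell^{-2}$ decay of $\alpha_\bY$ for $g > G$, the sums $\sum_{\sigma_n \le g \le G}(\Exp{d_n}-1)p^2$ and $\sum_{g > G}\tfrac12\alpha_\bY(g) = O(A/G)$ are both $O\bigl(\sqrt{A}\,p\,\Exp{d_n/2}\bigr) = p\,\Exp{o(n)}$, since $d_n = o(n)$. Thus the off-diagonal contribution of the gaps $g \ge \sigma_n$ is $O\bigl(Mp\,\Exp{o(n)}\bigr) = \Exp{\epsilon n + o(n)}$, while the $\Exp{-K_\ast n}$ error terms contribute $O\bigl(MG\,\Exp{-K_\ast n}\bigr) = O\bigl(\Exp{(\epsilon + 2K - K_\ast)n - d_n/2}\bigr)$, using $M/p \le 2\Exp{\epsilon n}p^{-2} \le 2\Exp{(\epsilon+2K)n}$. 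Dividing by $(\E_\bY N_M)^2 \asymp \Exp{2\epsilon n}$, we obtain $r_n = \Exp{-\epsilon n + o(n)} + \Exp{(2K - K_\ast - \epsilon)n - d_n/2}$, which is summable as soon as $K_\ast > 2K$ --- an admissible choice because \ref{it:WUD} holds for every $K \in \nn$.

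Case~(ii) is the delicate one and is where the argument must be pushed further, following \cite[\S{4.4}]{Ko2}: since $\beta_\bY$ is only assumed to decay like $\ell^{-1}$, the tail sum $\sum_{g > G}\beta_\bY(g)$ diverges logarithmically and the crude estimate above controls $\Va_\bY(N_M)$ only when $p$ is not exponentially small. The remedy is a two-scale blocking argument. One groups the positions into coarse blocks of length $S \asymp \Exp{o(n)}p^{-2}$ --- chosen large enough that an internal second-moment estimate, again combining \ref{it:WUD} with the $\ell^{-1}$ $\beta$-mixing decay, forces $a$ to occur inside a given coarse block with probability bounded below by a constant --- separates consecutive coarse blocks by a gap over which the $\beta$-mixing coefficient is summably small, and invokes the coupling characterization of $\beta$-mixing to replace the coarse blocks by independent copies at a cost that is summable in $n$; the probability that $a$ is avoided in every coarse block then decays geometrically in the number of coarse blocks, yielding the required bound in the regime where at least one coarse block fits into the time horizon, the complementary regime being covered directly. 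The quantitative bookkeeping in this last step --- reconciling the sub-exponential factor $\Exp{d_n}$ from \ref{it:WUD}, the possibly exponentially small word probability $p$, the exponentially long horizon $M$, and the slow $\ell^{-1}$ decay of $\beta_\bY$ --- is the main obstacle; by contrast, cases~(i) and~(iii) are essentially routine once the second-moment framework is in place.
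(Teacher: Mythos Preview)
Your overall scheme\,---\,second-moment estimate on the occurrence count, Chebyshev, Borel--Cantelli, and the reduction borrowed from Steps~1--2 of the proof of Theorem~\ref{thm-Ko-generalization}\,---\,coincides with the paper's. The paper organizes the variance estimate differently, however. Rather than counting \emph{all} occurrences of~$a$ in $y_1^M$, it samples only at positions $L_j := j(n+\sigma_n)$, so that any two sample windows are automatically separated by at least the \ref{it:WUD} gap~$\sigma_n$; this eliminates your separate treatment of overlapping and near-adjacent pairs and lets \ref{it:WUD} act directly between every pair of samples. More importantly, the crossover between the \ref{it:WUD} bound and the mixing bound is not placed at $G \asymp p^{-1}\Exp{-d_n/2}$ as in your case~(iii), but at an index $\bar{J}_n \asymp p^{-1}\Exp{\epsilon' n}$ for a fixed $0<\epsilon'<\epsilon$, corresponding to lags of order $p^{-1}\Exp{\epsilon' n}$. \ref{it:WUD} is used over the entire near range $1 \leq j \leq \bar{J}_n$, whose contribution $\bar{J}_n(\Exp{d_n}p^2+\Exp{-Kn})$ remains controllable because $\bar{J}_n/J_n = \Exp{-(\epsilon-\epsilon')n}$, while the mixing bound is invoked only at exponentially large lags. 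With this split the paper treats cases~(ii) and~(iii) by the \emph{same} calculation, arriving at a far-tail contribution of the form $C/L_{\bar{J}_n}$ in both; no two-scale blocking appears.

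Your case~(ii) is where the proposal has a genuine gap. The coarse blocks have length $S \asymp p^{-2}\Exp{o(n)}$ and the horizon is $M \asymp p^{-1}\Exp{\epsilon n}$, so the number of blocks is $M/S \asymp p\,\Exp{\epsilon n - o(n)}$. Since the reduction requires the argument to go through for every fixed pair $(K,\epsilon)$ with $p \geq \Exp{-Kn}$, the regime $K>\epsilon$ leaves you with fewer than one coarse block and the geometric decay never begins. You allude to a ``complementary regime \dots\ covered directly'' and call the bookkeeping ``the main obstacle'', but no mechanism is supplied; as written, this step is missing. The paper sidesteps the difficulty by the sparse sampling and the exponential crossover~$\bar{J}_n$ described above, which is why it does not single out case~(ii) as harder than case~(iii).
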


	\begin{proof} Case~i is established in~\cite[\S{4.4}]{Ko2} and we restrict ourselves to sketching the proofs of Cases~ii and~iii.
		For fixed $a \in \cA^n$, and
		with the shorthand $L_j := j(n+\sigma_n)$, we have
		\begin{align*}
			(\P_\bX\times\P_\bY)\left\{ x_1^n = a \textnormal{ and } W_n(x,y) > \exp(-\log \P_\bY^{(n)}(a) + n\epsilon) \right\}
			&\leq
				\P_\bX^{(n)}(a)\P_\bY
					\left\{
						y :
						\sum_{j = 1}^{J_n} \one_{[a]}(\varphi^{L_j}y) = 0
					\right\}
		\end{align*}
		for some $J_n \asymp \exp(-\log \P_\bY^{(n)}(a) + n\epsilon)$.\footnote{Through out this proof sketch ``$ \asymp $'' is used to denote equality up to multiplication by a subexponential term in~$n$ allowed to depend on~$K$.}
		Setting
		\[
			\Sigma(y) := \sum_{j = 1}^{J_n} \one_{[a]}(\varphi^{L_j}y),
		\]
		one derives the bound
		\begin{equation}
		\label{eq:var-trick}
			\P_\bX^{(n)}(a)\P_\bY\{y : \Sigma(y) = 0\}
				\leq \P_\bX^{(n)}(a)\frac{\Va_\bY[\Sigma]}{\E_\bY[\Sigma]^2},
		\end{equation}
		owing to the fact that
		\[
			\E_\bY [\Sigma] = J_n \P_\bY^{(n)}(a) > 0
		\]
		when $\P_\bX^{(n)}(a) > 0$ (recall the assumption  of absolute continuity).
		Hence, it follows from the same reasoning as in the first two steps of the proof of Theorem~\ref{thm-Ko-generalization} that it suffices to show that the quotient on the right-hand side of~\eqref{eq:var-trick} is exponentially decaying in~$n$, uniformly in~$a$ such that
		\begin{equation}
		\label{eq:Kn-decay-a}
			\log \P_\bY^{(n)}(a) \geq -\frac{nK}{2}.
		\end{equation}
		Using $\varphi$-invariance of~$\P_\bY$, one can show that
		\begin{align}
			\Va_{\bY}[\Sigma]
				&\leq J_n \sum_{j =1}^{J_n}  \Co_{\bY}[\one_{[a]}; \one_{[a]} \circ \varphi^{L_j}],
		\label{mont-e1}\end{align}
		where $\Co_{\bY}[\,\cdot\,; \cdot\,]$ stands for the covariance with respect to~$\P_\bY$. The decoupling and mixing hypotheses will be used in different parts of the summation on the left-hand side of~\eqref{mont-e1}: the former from~$1$ to~$\bar{J}_n \asymp \Exp{-\log \P_\bY^{(n)}(a) + n \epsilon'}$; the latter from $\bar{J}_n+1$ to~$J_n \asymp \Exp{-\log \P_\bY^{(n)}(a) + n \epsilon}$ respectively, for some $\epsilon' < \epsilon$.
		\begin{itemize}
			\item By~\ref{it:WUD}
			\begin{align*}
				\sum_{j = 1}^{\bar{J}_n} \Co_{\bY}[\one_{[a]}; \one_{[a]} \circ \varphi^{L_j}]
					\leq \sum_{j = 1}^{\bar{J}_n} \P_{\bY}\left\{y_1^n = a \text{ and } y_{L_j+1}^{L_{j} + n} = a \right\}
					\leq \bar{J}_n \left(\Exp{d_n}\P_{\bY}^{(n)}(a)^2 + \Exp{-nK}\right).
			\end{align*}
			\item For the remaining part of the sum, one obtains a bound in terms of mixing coefficients: either
			\begin{align*}
				\sum_{j = \bar{J}_n + 1}^{J_n} \Co_{\bY}[\one_{[a]}; \one_{[a]} \circ \varphi^{L_j}]
					&\leq \sum_{j = \bar{J}_n + 1}^{J_n} \P_{\bY}([a] \cap \varphi^{-L_j}[a]) - \P_{\bY}^{(n)}(a)^2
					\leq \sum_{k = L_{\bar{J}_n+1}-n}^\infty \alpha_{\bY}(k),
			\end{align*}
			or
			\begin{align*}
				\sum_{j = \bar{J}_n + 1}^{J_n} \Co_{\bY}[\one_{[a]}; \one_{[a]} \circ \varphi^{L_j}]
					&\leq \sum_{j = \bar{J}_n + 1}^{J_n} \P_{\bY}([a] \cap \varphi^{-L_j}[a]) - \P_{\bY}^{(n)}(a)^2
					\leq 2\beta_{\bY}(L_{\bar{J}_n + 1} - n).
			\end{align*}
			In both cases, this yields
			\begin{align*}
				\sum_{j = \bar{J}_n + 1}^{J_n} \Co_{\bY}[\one_{[a]}; \one_{[a]} \circ \varphi^{L_j}]
					&\leq \frac{C}{L_{\bar{J}_n}}
			\end{align*}
			for some constant~$C$ depending only on the relevant mixing coefficients. It follows from the assumption on~$\sigma_n$ that $L_{\bar{J}_n} \asymp \Exp{-\log \P_\bY^{(n)}(a) + n \epsilon}$.
		\end{itemize}
		Finally, the proof is completed verifying the exponential decay of
		\begin{align*}
			\frac{\bar{J}_n \Exp{c_n} \P_\bY^{(n)}(a)^2 + \bar{J}_n \Exp{-nK} + C L_{\bar{J}_n}^{-1}}{{J_n} \P_\bY^{(n)}(a)^2}
			\asymp \Exp{c_n - (\epsilon - \epsilon')n} + \Exp{-(\epsilon - \epsilon')n - nK - 2 \log \P_\bY^{(n)}(a)} + \Exp{-(\epsilon' + \epsilon)n},
		\end{align*}
		where we used~\eqref{eq:Kn-decay-a}.
	\end{proof}
\end{remark}

\begin{remark} {\bf The absolute continuity assumption.}
	In  Sections~\ref{sec-cont} and~\ref{sec-mr} we have assumed, following Kontoyiannis, that $\P_{\bX}^{(n)} \ll \P_{\bY}^{(n)}$ for all~$n\in \nn$.
	We remark that if this assumption fails and  $\bX$ is ergodic, then
	\begin{equation}
		H^{\textnormal{cross}}[\bX,\bY] = \infty
		\label{mont-rel1}
	\end{equation}
	and
	\begin{equation}
		\lim_{n\to\infty} \frac{\log W_n(x,y)}{n} = \infty
	\label{mont-rel2} \end{equation}
	for $(\P_{\bX}\times\P_{\bY})$-almost all pairs $(x,y)$. The relation~\eqref{mont-rel1} is immediate. To prove~\eqref{mont-rel2},  let $n_0\in \nn$ and $a \in \cA^{n_0}$ be such that $\P_{\bY}^{(n_0)}(a) = 0$ and $\P_{\bX}^{(n_0)}(a) > 0$.
	Then, by ergodicity, for $\P_{\bX}$-almost all~$x$, there exists~$N(x)$ such that $\varphi^{N(x)} x \in [a]$ and
	\begin{align*}
		\P_{\bY}\left\{y : \tfrac 1{N+n_0} \log W_{N+n_0}(x,y) < \infty \right\}
			&\leq \sum_{k=1}^\infty \P_{\bY}(\varphi^{-k}[x_1^{N+n_0}])
			\leq \sum_{k=1}^\infty \P_{\bY}^{(n_0)}(a)
			= 0
	\end{align*}
	for all~$N \geq N(x)$. Since the above holds for $\P_{\bX}$-almost all~$x$, it follows that
	\begin{align*}
		(\P_{\bX} \times \P_{\bY})\left\{ (x,y) : \liminf_{n\to\infty} \frac{\log W_n(x,y)}{n} < \infty \right\}
			&= \int \P_{\bY}\left\{y : \liminf_{n\to\infty} \frac{\log W_n(x,y)}{n} < \infty \right\} \d\P_{\bX}(x) \\
			&= 0.
	\end{align*}
\end{remark}

\begin{remark}{\bf Companion paper~\cite{CDEJR1}}.
  Some of the results of this paper were  announced in~\cite[\S{6}]{CDEJR1} in the special case where the process
  $\bY$ is the reversal of the process $\bX$. The work~\cite{CDEJR1} served as an introduction to a research program dealing with the mathematical theory of entropic estimators,
  which this work is also part of, and concerned typical-signal estimators  of entropy production of one-sided shift. As announced, the results of this paper considerably extend the range of validity of Theorem 3.4 in~\cite{CDEJR1}.
\end{remark}


\end{document}